\documentclass[12pt]{article}
\pdfoutput=1
\usepackage[selected=true,shrink=15,stretch=30,letterspace=30]{microtype}
\usepackage{amsthm}
\usepackage{amsmath}
\usepackage{amssymb}
\usepackage{xcolor,paralist}
\usepackage[hyphens]{url}
\usepackage[unicode=true,pdftex,colorlinks=true, pdfborderstyle={/S/D/D[2 1]/W .5},linkcolor=blue,citecolor=violet,urlcolor=black,pdfversion=1.5,pdflang=en-GB
,pdfauthor={Alexander Dyachenko and Dmitrii Karp},pdfsubject={MSC(2010) 33C05, 30B70, 47B50}
,pdftitle={Ratios of the Gauss hypergeometric functions with parameters shifted by integers: more on integral representations}
,pdfkeywords={Gauss hypergeometric function, Integral representation, Runckel's theorem}
]{hyperref}

\newcommand{\coloneqq}{\mathrel{\mathop:}=}
\newcommand{\eqqcolon}{=\mathrel{\mathop:}}
\newcommand{\sign}{\operatorname{sign}}

\def\R{\mathbb{R}}
\def\C{\mathbb{C}}
\def\N{\mathbb{N}}
\def\Z{\mathbb{Z}}
\voffset -2.5cm%
\hoffset -1.4cm%
\textheight 23cm%
\textwidth 17cm%
\newtheorem{theorem}{\hspace*{\parindent}Theorem}
\newtheorem{lemma}{\hspace*{\parindent}Lemma}
\newtheorem{corollary}{\hspace*{\parindent}Corollary}
\title{Ratios of the Gauss hypergeometric functions with parameters shifted by integers:
    more on integral representations}
\author{A.\:Dyachenko$^{\rm a}$\footnote{Corresponding author. E-mail: A.\:Dyachenko -- \emph{diachenko@sfedu.ru},
D.\:Karp --  \emph{dimkrp@gmail.com}}~~and D.\:Karp$^{\rm b}$
\\[10pt]
\\
\small{\textit{$\phantom{1}^a$Keldysh Institute of Applied Mathematics (RAS), Miusskaya sq.~4, 125047 Moscow,  Russia}}
\\
\small{\textit{$\phantom{1}^b$Far Eastern Federal University, Ajax Bay~10, 690922 Vladivostok, Russia}}
}
\date{}
\begin{document}
\maketitle

%\\[10pt]
%\\
%\small{\textit{$\phantom{1}^a$Keldysh Institute of Applied Mathematics (RAS), Miusskaya sq.~4, 125047 Moscow,  Russia}}
%\\
%\small{\textit{$\phantom{1}^b$Far Eastern Federal University, Ajax Bay~10, 690922 Vladivostok, Russia}}

\begin{abstract}
We consider the ratio of two Gauss hypergeometric functions, in which the parameters of the
    numerator function differ from the respective parameters of the denominator function by
    integers. We derive explicit integral representations for this ratio based on a formula for
    its imaginary part. This work extends our recent results by lifting certain restrictions on
    parameters. The new representations are illustrated with a few examples and an application
    to products of ratios.
\end{abstract}

\bigskip

Keywords: \emph{Gauss hypergeometric function, Integral representation, Runckel's theorem}

\bigskip

MSC2010: 33C05, 30B70, 47B50

\bigskip

\section{Introduction}

Let~$(a)_n\coloneqq a(a+1)\cdots(a+n-1)$, ~$(a)_0=1$, denote the rising factorial. The Gauss
hypergeometric function (\cite[Chapter~II]{HTF1}, \cite[Chapter~15]{DLMF}, \cite{Gauss}) is
defined as the analytic continuation of the sum of the power series
\begin{equation}\label{eq:2F1_def}
    {}_2F_1(a,b;c;z)={}_2F_1\bigg(\!
    {\renewcommand{\arraystretch}{1.1}
        \begin{array}{l}a,b\\c
        \end{array}}
    \bigg|\,\,z\bigg)=\sum\limits_{n=0}^{\infty}\frac{(a)_n(b)_n}{(c)_nn!}z^n.
\end{equation}
For~$a,b\notin-\N_0$ (if this condition is violated, then ${}_2F_1$
reduces to a polynomial), one usually introduces the branch cut~$[1,+\infty)$ to make it
analytic and single-valued in the rest of the complex plane. The functions~${}_2F_1(a,b;c;z)$
and ${ }_2F_1(a+n_1,b+n_2;c+m;z)$, ~$n_1,n_2,m\in\Z$, are called associated~\cite[p.~58]{HTF1}
or contiguous in a wide sense. Gauss showed that any three functions of this type satisfy a
linear relation with coefficients rational in $a,b,c,z$. For $n_1,n_2,m\in\{-1,0,1\}$, this
relation has coefficients linear in $z$, and the functions are called contiguous (in a narrow
sense). In our recent paper \cite{DK} we initiated a study of the ratios
\begin{equation}\label{eq:gen-ratio-def}
    R_{n_1,n_2,m}(z)=\frac{{}_2F_1(a+n_1,b+n_2;c+m;z)}{{}_2F_1(a,b;c;z)},
\end{equation}
with arbitrary integer ~$n_1,n_2,m$. The particular case $R_{0,1,1}$ was investigated already by
Gauss who found a continued fraction for this ratio which, under additional restrictions on
parameters, becomes a Stieltjes or $S$-fraction convergent to a Stieltjes transform of a
positive measure. Explicit density of this measure was found many decades later by Belevitch
\cite{Bel}. In~\cite{DK}, we extended this integral representation to the general
$R_{n_1,n_2,m}(z)$ under the assumptions that~$a,b,c\in\R$, ~$c,c+m\notin-\N_0$
and~$R_{n_1,n_2,m}(z)$ has no poles in~$\C\setminus[1,+\infty)$. We additionally assumed that
the behaviour of $R_{n_1,n_2,m}(z)$ near~$z=1$ is mild, so that the singularity at this point is
integrable. The main purpose of the present paper is to drop these restrictions and extend the
representations given in~\cite{DK} to a more general setting. Firstly, we will get rid of the
assumption of integrability near the point $z=1$ and allow arbitrary behaviour in the
neighbourhood of this point. Secondly, we will remove the assumption that $R_{n_1,n_2,m}(z)$ is
free of poles in~$\C\setminus[1,+\infty)$. However, the representation we obtain for ratios with
poles will depend on rational functions whose numerators and denominators have explicit degrees
but unknown coefficients. Calculation of these coefficients requires the knowledge of the zeros
of the Gauss hypergeometric function and residues of $R_{n_1,n_2,m}(z)$ at these zeros (for
generic values of its parameters). These, of course, cannot be given explicitly, but may, in
principle, be computed numerically.

\section{Asymptotic behaviour and boundary values}

It will be convenient to use the following notation: if~$a$ is a real number, then
\[
(a)_-\coloneqq \min(a,0)
\quad\text{and}\quad
(a)_+\coloneqq \max(a,0).
\]
Derivation of the integral representations for the ratio~$R_{n_1,n_2,m}$ will require certain
estimates of its asymptotic behaviour. The following is a condensed and corrected form of
\cite[Subsection~2.1]{DK}:
\begin{lemma}\label{lm:asymptotic}
    Let~$c,c+m\notin-\N_0$, and let~$a,b\in\R$. Then there exist four
    constants~$\varepsilon_1,\varepsilon_\infty\in\{-1,0,1\}$ and~$L_1,L_\infty\ne 0$
    independent of~$z$ such that
    \begin{align}\label{eq:asymp1}
      R_{n_1,n_2,m}(z)
      &=L_1\,(1-z)^{\eta(a+n_1,b+n_2,c+m)-\eta(a,b,c)}
        \,[\log(1-z)]^{\varepsilon_1}\big(1+o(1)\big)
        &\text{as}~~z\to1;
      \\[2pt]
      \label{eq:asymp-inf}
      R_{n_1,n_2,m}(-z)
      &=L_\infty\, z^{\zeta(a+n_1,b+n_2,c+m)-\zeta(a,b,c)}
        \,[\log(z)]^{\varepsilon_\infty}\big(1+o(1)\big)
        &\text{as}~~z\to\infty,
    \end{align}
    where we put
    \begin{equation} \label{eq:asymp_1_eta}
        \eta(a,b,c)=\begin{cases}
            (c-a-b)_{+},& \text{if~~}-a,b-c\in\N_0\text{~~or~~}-b,a-c\in\N_0;\\
            0,& \text{if } -a\in\N_0\text{ and/or }-b\in\N_0\text{ while }a-c,b-c\notin\N_0;\\
            c-a-b,& \text{if } -a,-b\notin\N_0,\text{ while }a-c\in\N_0\text{ and/or }b-c\in\N_0;\\
            (c-a-b)_{-},&\text{otherwise}
        \end{cases}
    \end{equation}
    and
    \begin{equation} \label{eq:asymp_inf_eta}
        \zeta(a,b,c)=\begin{cases}
            -a,& \text{if~~} b-c\in\N_0 \text{~~and/or~~} -a\in\N_0;\\
            -b,& \text{if~~} a-c\in\N_0 \text{~~and/or~~} -b\in\N_0;\\
            -\min(a,b),&\text{otherwise.}
        \end{cases}
    \end{equation}
\end{lemma}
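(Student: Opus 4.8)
The plan is to deduce both estimates from the classical asymptotics of a \emph{single} Gauss function, applied separately to the numerator and the denominator of~$R_{n_1,n_2,m}$. Concretely, the task is to establish that for every admissible choice of parameters ($c\notin-\N_0$) there are constants $A\ne0$ and $\varepsilon\in\{0,1\}$ with
\[
{}_2F_1(a,b;c;z)=A\,(1-z)^{\eta(a,b,c)}[\log(1-z)]^{\varepsilon}\big(1+o(1)\big)\quad\text{as }z\to1,
\]
and likewise ${}_2F_1(a,b;c;-z)=A'\,z^{\zeta(a,b,c)}[\log z]^{\varepsilon'}(1+o(1))$ with $A'\ne0$, $\varepsilon'\in\{0,1\}$ as $z\to\infty$. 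Granting these, \eqref{eq:asymp1}--\eqref{eq:asymp-inf} follow by division: $L_1$ and $L_\infty$ are the (nonzero) quotients of the corresponding leading coefficients, the powers of $(1-z)$ and of $z$ are the differences $\eta(a+n_1,b+n_2,c+m)-\eta(a,b,c)$ and $\zeta(a+n_1,b+n_2,c+m)-\zeta(a,b,c)$, and $\varepsilon_1,\varepsilon_\infty$, being differences of two numbers from $\{0,1\}$, lie in $\{-1,0,1\}$.

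For $z\to1$ I would start from the linear transformation formula writing ${}_2F_1(a,b;c;z)$ as a combination of ${}_2F_1(\cdot,\cdot;\cdot;1-z)$ and $(1-z)^{c-a-b}{}_2F_1(\cdot,\cdot;\cdot;1-z)$ (see \cite[\S15.8]{DLMF}). When $c-a-b\notin\Z$ the leading power is $(1-z)^{(c-a-b)_-}$, its coefficient (a quotient of Gamma values) is nonzero in the generic situation, and a logarithm occurs only in the borderline case $c-a-b=0$; this is the last (``otherwise'') branch of~\eqref{eq:asymp_1_eta}. The degenerate branches come from two sources. First, when $-a\in\N_0$ or $-b\in\N_0$ the function is a polynomial in $z$; Euler's transformation ${}_2F_1(a,b;c;z)=(1-z)^{c-a-b}{}_2F_1(c-a,c-b;c;z)$ exhibits the explicit factor $(1-z)^{c-a-b}$, and the Chu--Vandermonde/Gauss summation shows that the remaining value at $z=1$ does not vanish once $c\notin-\N_0$; sorting out whether $a-c$ or $b-c$ lies in $\N_0$ then produces the first three branches of~\eqref{eq:asymp_1_eta}. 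Second, a pole of one of the Gamma factors $\Gamma(a),\Gamma(b),\Gamma(c-a),\Gamma(c-b)$ in the transformation coefficients annihilates one of the two terms, thereby either promoting $(1-z)^{c-a-b}$ to leading order or transferring the logarithm to the surviving term; tracking which factor is singular reproduces exactly the conditions ``$-a,b-c\in\N_0$ or $-b,a-c\in\N_0$'' versus ``$a-c$ and/or $b-c\in\N_0$'' in~\eqref{eq:asymp_1_eta}.

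The analysis near $z=\infty$ is parallel, now using the transformation to argument $1/z$ (\cite[\S15.8]{DLMF}). For $a\ne b$ with $a-b\notin\Z$ the leading power is $(-z)^{-\min(a,b)}$ with a nonzero Gamma-quotient coefficient, giving the ``otherwise'' branch of~\eqref{eq:asymp_inf_eta}; when $a=b$ a single logarithm appears. A pole of $\Gamma(b)$ or $\Gamma(c-a)$ (i.e.\ $-b\in\N_0$ or $a-c\in\N_0$) kills the $(-z)^{-a}$ term and leaves the exponent $-b$, and symmetrically a pole of $\Gamma(a)$ or $\Gamma(c-b)$ leaves $-a$; the polynomial subcases $a\in-\N_0$ or $b\in-\N_0$ are again handled by the Euler (or Pfaff) transformation together with direct inspection of the terminating series. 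This yields~\eqref{eq:asymp_inf_eta}.

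Finally one applies these two facts to ${}_2F_1(a+n_1,b+n_2;c+m;z)$ and to ${}_2F_1(a,b;c;z)$ --- legitimate because $c,c+m\notin-\N_0$ and all six parameters are real --- and divides; since the denominator has a nonzero leading coefficient, the quotient has exactly the asserted shape. The main obstacle, and the point at which the earlier account in~\cite{DK} needed correcting, is the exhaustive bookkeeping of the degenerate integer configurations: keeping track of which Gamma factor in each connection formula becomes singular, how that interacts with the polynomial reductions through Euler's transformation, and verifying that in every case the surviving leading coefficient is genuinely nonzero.
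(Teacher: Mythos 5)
Your proposal is correct and follows essentially the same route as the source on which the paper relies for this lemma (it gives no proof here, only citing \cite[Subsection~2.1]{DK}): connection formulas for a single~${}_2F_1$ at~$z=1$ and at~$z=\infty$, a case-by-case bookkeeping of the degenerate integer configurations of~$a,b,c-a,c-b$ (polynomial reductions via Euler/Pfaff, vanishing Gamma-quotient coefficients, borderline logarithmic cases), and then division of the numerator asymptotics by the denominator asymptotics. Since the denominator's leading coefficient is nonzero in every case, the quotient has the asserted form with $\varepsilon_1,\varepsilon_\infty\in\{-1,0,1\}$, exactly as you argue.
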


\noindent(Note that the above formulae~\eqref{eq:asymp1}--\eqref{eq:asymp-inf} also work for the
degenerate cases --- i.e. when~${}_2F_1(a,b;c;z)$ is a polynomial or polynomial multiple of power of $1-z$.)

Another important ingredient is the next theorem giving an explicit representation for the
imaginary part of $R_{n_1,n_2,m}(z)$ on the banks of the branch cut $[1,+\infty)$.
Given~$n_1,n_2,m\in\Z$ let us introduce the following related quantities:
\begin{equation}\label{eq:nmrelated}
\begin{split}
\underline{n}=\min(n_1,n_2),&~~\overline{n}=\max(n_1,n_2),~~ p=(m-n_1-n_2)_{+},~~  l=(n_1+n_2-m)_{+},
\\
&r=l+(m)_{+}-\underline{n}-1=\left\{\!\!\!
\begin{array}{l}
\max(m-\underline{n},\overline{n})-1,~~m\ge0
\\
\max(-\underline{n},\overline{n}-m)-1,~~m\le0.
\end{array}
\right.
\end{split}
\end{equation}
Observe that $p-l=m-n_{1}-n_{2}$ and that $r$ is only negative when $n_1=n_2=m=0$, in which
case $r=-1$.

\begin{theorem}[{\cite[Theorem~2.11]{DK}}]\label{th:2F1ratioboundary}
    Suppose that~$n_1,n_2,m\in\Z$. On the banks of the branch cut $x>1$, the following expression holds
    \begin{subequations}\label{eq:2F1ratioboundary_with_B}
        \begin{equation}\label{eq:2F1ratioboundary}
            \Im[R_{n_1,n_2,m}(x\pm i0)]
            =\pm{\pi}B_{n_1,n_2,m}(a,b,c)\frac{x^{l-\underline{n}-c}(x-1)^{c-a-b-l}P_{r}(1/x)}{|{}_{2}F_{1}(a,b;c;x)|^{2}},
        \end{equation}
        where
        \begin{equation}\label{eq:B-defined}
            B_{n_1,n_2,m}(a,b,c)
            =-\frac{\Gamma(c)\Gamma(c+m)}{\Gamma(a)\Gamma(b)\Gamma(c-a+m-n_1)\Gamma(c-b+m-n_2)}
        \end{equation}
    \end{subequations}
    and $P_r(t)$ is a polynomial of degree $r$ ~\emph{(}$P_{-1}\equiv0$\emph{)}.
\end{theorem}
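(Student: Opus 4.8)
The plan is to obtain the jump of $R_{n_1,n_2,m}$ across the cut directly from the Euler connection formula at $z=1$ and then to collapse the resulting bilinear expression in contiguous ${}_2F_1$'s into a single Wronskian. Since $a,b,c$ are real, the Maclaurin coefficients of both ${}_2F_1(a,b;c;z)$ and ${}_2F_1(a+n_1,b+n_2;c+m;z)$ are real, so $\overline{R_{n_1,n_2,m}(z)}=R_{n_1,n_2,m}(\bar z)$ on $\C\setminus[1,+\infty)$ and hence $\Im[R_{n_1,n_2,m}(x\pm i0)]=\pm\tfrac{1}{2i}\bigl(R_{n_1,n_2,m}(x+i0)-R_{n_1,n_2,m}(x-i0)\bigr)$ for $x>1$. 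With $\sigma\coloneqq c-a-b$ I would write the denominator via Euler's formula
\[
{}_2F_1(a,b;c;z)=A\,w_1(z)+B\,(1-z)^{\sigma}v(z),
\]
where $w_1(z)={}_2F_1(a,b;a+b-c+1;1-z)$, $v(z)={}_2F_1(c-a,c-b;c-a-b+1;1-z)$, $A=\Gamma(c)\Gamma(\sigma)/[\Gamma(c-a)\Gamma(c-b)]$, $B=\Gamma(c)\Gamma(-\sigma)/[\Gamma(a)\Gamma(b)]$, and the numerator analogously with tilded constants $\tilde A,\tilde B$ and tilded functions $\tilde w_1,\tilde v$, the exponent now shifted by the integer $m-n_1-n_2=p-l$. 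Both banks differ only through the monodromy of $(1-z)^{\sigma}$, and a short computation using $\overline{D(x+i0)}=D(x-i0)$ yields
\[
\Im[R_{n_1,n_2,m}(x\pm i0)]
=\pm\frac{\sin(\pi\sigma)\,(x-1)^{\sigma}\bigl[\tilde A B\,\tilde w_1(x)v(x)-(-1)^{p-l}A\tilde B\,(x-1)^{p-l}w_1(x)\tilde v(x)\bigr]}{|{}_2F_1(a,b;c;x)|^{2}},
\]
which already exhibits the denominator and the sign pattern of \eqref{eq:2F1ratioboundary}.

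The crux is to evaluate the square bracket in closed form. By Gauss's contiguous relations one can write the numerator as
\[
{}_2F_1(a+n_1,b+n_2;c+m;z)=Q_1(z)\,{}_2F_1(a,b;c;z)+Q_2(z)\,\tfrac{d}{dz}{}_2F_1(a,b;c;z),\qquad Q_1,Q_2\in\R(z).
\]
Substituting the two Euler decompositions into this identity and matching the parts analytic at $z=1$ against those carrying the factor $(1-z)^{\sigma}$ (legitimate for $\sigma\notin\Z$) expresses $\tilde w_1$ through $w_1,w_1'$ and $\tilde v$ through $v,v'$. Feeding this back into the square bracket cancels every term except the Wronskian of the fundamental system $\{w_1,(1-z)^{\sigma}v\}$; since the latter equals $\mathcal W_0\,z^{-c}(1-z)^{\sigma-1}$ by Abel's formula, one is left with
\[
\Im[R_{n_1,n_2,m}(x\pm i0)]
=\pm\,Q_2(x)\cdot\frac{\sin(\pi\sigma)\,AB\,\mathcal W_0\;x^{-c}(x-1)^{\sigma-1}}{|{}_2F_1(a,b;c;x)|^{2}},
\]
where, by the reflection formula, $\sin(\pi\sigma)AB\mathcal W_0=\pi\Gamma(c)^2/[\Gamma(a)\Gamma(b)\Gamma(c-a)\Gamma(c-b)]$.

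It remains to pin down the rational function $Q_2$. Comparing with the target \eqref{eq:2F1ratioboundary}--\eqref{eq:B-defined}, and using $l-\underline n-r=1-(m)_{+}$ from \eqref{eq:nmrelated}, the assertion becomes
\[
Q_2(z)=-\frac{\Gamma(c-a)\Gamma(c-b)\Gamma(c+m)}{\Gamma(c)\Gamma(c-a+m-n_1)\Gamma(c-b+m-n_2)}\;z^{\,l-\underline n}(z-1)^{1-l}P_r(1/z)
\]
with $P_r$ a polynomial of degree $r$. I would prove this by induction on $|n_1|+|n_2|+|m|$, tracking how the pair $(Q_1,Q_2)$ transforms under the six elementary parameter shifts: each step multiplies $Q_2$ by a rational factor having poles only at $0$ and $1$, and the changes of order at $0$, $1$, $\infty$ are exactly those bookkept by $\underline n$, $l$ and $r$, while the $\Gamma$-quotient accumulates to the stated value; the degree of $P_r$ is then confirmed by matching the exponents against the boundary behaviour of $R_{n_1,n_2,m}$ at $z=1$ and $z=\infty$ supplied by Lemma~\ref{lm:asymptotic} (in the trivial case $n_1=n_2=m=0$ one has $Q_2\equiv0$, consistent with $P_{-1}\equiv0$). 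Finally the whole argument is carried out first for parameters with $c-a-b\notin\Z$, so that the Euler decomposition is unambiguous and free of logarithms, and then extended to all admissible $a,b,c$ by continuity, both sides of \eqref{eq:2F1ratioboundary} depending continuously on the parameters; this covers the degenerate configurations entering \eqref{eq:asymp1}--\eqref{eq:asymp-inf}.

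The step I expect to be the main obstacle is this last one: verifying that the rational function $Q_2$ has \emph{exactly} the asserted orders at $0$ and $1$ and degree $r$ at infinity, with no accidental cancellation, and that the accumulated $\Gamma$-factors collapse precisely to \eqref{eq:B-defined}. Everything preceding it is essentially bookkeeping around a single Wronskian identity.
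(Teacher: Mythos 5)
The first half of your argument is correct and follows a genuinely different route from the one underlying the paper's statement. Your jump formula, the reduction of $\Im\big[N(x+i0)\overline{D(x+i0)}\big]$ to $\sin(\pi\sigma)(x-1)^{\sigma}\big[\tilde A B\,\tilde w_1 v-(-1)^{p-l}A\tilde B\,(x-1)^{p-l}w_1\tilde v\big]$, the collapse of this bracket to $Q_2$ times the Wronskian of $\{w_1,(1-z)^{\sigma}v\}$, the value $\mathcal W_0=-\sigma$ giving $\sin(\pi\sigma)AB\,\mathcal W_0=\pi\Gamma(c)^2/[\Gamma(a)\Gamma(b)\Gamma(c-a)\Gamma(c-b)]$, and the bookkeeping $l-\underline n-r=1-(m)_+$ all check out. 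This correctly reduces the theorem to a structural claim about the coefficient $Q_2$ in ${}_2F_1(a+n_1,b+n_2;c+m;z)=Q_1F+Q_2F'$. By contrast, the proof behind the quoted statement (see \cite{DK} and Lemma~\ref{lm:2F1identity}, taken from \cite{CKP}) evaluates the cross product directly through an explicit hypergeometric identity, which is what produces the closed-form ${}_4F_3(1)$ coefficients $K_j$ of $P_r$; your route, if completed, would establish the degree and the prefactor more conceptually but would not yield those explicit coefficients.

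The gap is exactly where you expect it, and it is not a technicality: once the Wronskian reduction is done, the \emph{entire} content of \eqref{eq:2F1ratioboundary}--\eqref{eq:B-defined} is the assertion that $Q_2(z)$ equals the stated $\Gamma$-quotient times $z^{l-\underline n}(z-1)^{1-l}P_r(1/z)$ with $\deg P_r=r$, and your induction rests on a false premise. A single contiguous shift is a first-order operator $u(z)\tfrac{d}{dz}+v(z)$ with rational coefficients; applying it to $Q_1F+Q_2F'$ and eliminating $F''$ via the hypergeometric equation sends
\begin{equation*}
  Q_2\;\longmapsto\;u\,Q_1+u\,Q_2'+\Big(v-u\,\tfrac{c-(a+b+1)z}{z(1-z)}\Big)Q_2 ,
\end{equation*}
so the new $Q_2$ mixes $Q_1$, $Q_2$ and the derivative $Q_2'$ --- it is \emph{not} a rational multiple of the old $Q_2$. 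Hence the orders at $0$, $1$, $\infty$ are not additive along the chain of shifts, the ``accidental cancellations'' you flag cannot be ruled out by the mechanism you describe, and the $\Gamma$-quotient does not accumulate multiplicatively. To close the argument you would need an independent determination of the exact orders of $Q_2$ (for instance by comparing with the known local exponents of $R_{n_1,n_2,m}$ at $0$, $1$ and $\infty$ from Lemma~\ref{lm:asymptotic} and controlling the zero count of $Q_2$), or else the explicit identity of \cite{CKP}. As written, your argument proves only that $\Im[R_{n_1,n_2,m}(x\pm i0)]$ is an explicit constant times $Q_2(x)\,x^{-c}(x-1)^{c-a-b-1}/|{}_2F_1(a,b;c;x)|^2$ for \emph{some} real rational $Q_2$, not the theorem itself.
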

Note that the coefficients of~$P_r$ depend on the parameters, so the whole
expression~\eqref{eq:2F1ratioboundary} may remain nonzero even when~$B_{n_1,n_2,m}(a,b,c)$
vanishes. The polynomial~$P_r(t)$ can be computed via the Taylor expansion of the underlying
hypergeometric identity~\cite[eq.~(2.19)]{DK} multiplied by~$t^{-\underline{n}}(1-t)^{p}$, in
which $\alpha=a$, $\beta=1-c+a$, $\gamma=1-b+a$. Alternatively, one can write~$P_r(t)$
explicitly in the following form:
\begin{lemma}[{\cite[Lemma~2]{CKP}}]\label{lm:2F1identity}
    For~$r\ge0$, the polynomial $P_r(t)$ in~\eqref{eq:2F1ratioboundary} is given by
    \begin{subequations}\label{eq:Prexplicit}
        \begin{equation*}
            P_{r}(t)=(-1)^{\overline{n}}\sum_{k=0}^{r}(-t)^k
            \!\!\sum\limits_{j=(k-p)_{+}-\overline{n}}^{k-\overline{n}}\!\!\!\!\!\!
            (-1)^{j}\binom{p}{k-\overline{n}-j}K_{j},
        \end{equation*}
        where, with the convention $1/(-i)!=0$ for $i\in\N$,
\begin{multline*}
K_{j}=\frac{(1-a)_{j}(c-a)_{m+j}}{(b-a)_{n_2+j+1}(j+n_1)!}
{}_{4}F_{3}\!\left(\!\left.\!\begin{array}{l}-j-n_1,a,1+a-c,a-b-n_2-j
\\
a-j,1+a-c-m-j,1+a-b\end{array}\!\vline\,\,1\right.\right)
\\
+
\frac{(1-b)_{j}(c-b)_{m+j}}{(a-b)_{n_{1}+j+1}(j+n_2)!}
{}_{4}F_{3}\!\left(\!\left.\!\begin{array}{l}-j-n_2,b,1+b-c,b-a-n_{1}-j
\\
b-j,1+b-c-m-j,1+b-a\end{array}\!\vline\,\,1\right.\right)\!.
\end{multline*}
     \end{subequations}
\end{lemma}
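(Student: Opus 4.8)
The plan is to turn the computational recipe described just above into an explicit calculation. Put $t=1/x$. By Theorem~\ref{th:2F1ratioboundary}, up to the explicit prefactor $B_{n_1,n_2,m}(a,b,c)$ and the power factors $x^{l-\underline n-c}(x-1)^{c-a-b-l}$, the polynomial $P_r$ records the jump of $R_{n_1,n_2,m}$ across the cut $[1,+\infty)$ expressed in the variable $t$. This jump is governed by the hypergeometric three-term identity of~\cite[eq.~(2.19)]{DK} under the substitution $\alpha=a$, $\beta=1-c+a$, $\gamma=1-b+a$, which is precisely the connection formula between the two Frobenius solutions of the hypergeometric equation at $z=\infty$, whose exponents are $-a$ and $-b$. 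Multiplying that identity by $t^{-\underline n}(1-t)^{p}$ turns its right-hand side into an honest polynomial in $t$, and I would identify $(-1)^{\overline n}P_r(t)$ with its Taylor polynomial of degree $r$ at $t=0$; the degree then agrees with Theorem~\ref{th:2F1ratioboundary}.

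First I would separate the two Frobenius contributions. The identity of~\cite{DK} is invariant under the simultaneous swap $a\leftrightarrow b$, $n_1\leftrightarrow n_2$ and breaks into a sum of two pieces, one attached to the exponent $-a$ and one to $-b$; these will produce, respectively, the first and the second summand of $K_j$. For each piece I would write out the product of the associated ${}_2F_1$-series, the rational prefactor inherited from~\eqref{eq:B-defined}, and the power $(1-t)^{p}$, and expand everything as a power series in $t$. Extracting the coefficient of $t^{j+\overline n}$ from the ``core'' factor (the part not including $(1-t)^p$) should yield $K_j$, with the two internal finite sums emerging as Cauchy products of two ${}_2F_1$'s. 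To recognise these as the stated ${}_4F_3(1)$'s I would verify that the ratio of consecutive terms of each product is a rational function of the summation index of the required shape, and that the series terminates at the upper limit $-j-n_1$ (resp.\ $-j-n_2$), which happens automatically once one adopts the convention $1/(-i)!=0$.

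It then remains to reinstate the factor $(1-t)^{p}=\sum_{i\ge0}\binom{p}{i}(-t)^i$: writing out the product $(1-t)^{p}\,t^{\overline n}\sum_{j}K_j t^{j}$ and reading off the coefficient of $(-t)^k$ gives the inner double sum $\sum_{j}(-1)^j\binom{p}{k-\overline n-j}K_j$, where the range $(k-p)_{+}-\overline n\le j\le k-\overline n$ is exactly the condition $0\le k-\overline n-j\le p$ together with the natural lower cutoff $j\ge-\overline n$ forced by the factorials $1/(j+n_1)!$ and $1/(j+n_2)!$ in $K_j$. Since by construction only the powers $t^0,\dots,t^r$ occur, comparing one normalisation point (say the bottom coefficient) pins down the overall constant $(-1)^{\overline n}$.

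The main obstacle will be the bookkeeping in the middle step: carrying the numerous Pochhammer symbols through the Cauchy products so that the inner sums come out \emph{precisely} in ${}_4F_3(1)$ form, and doing so uniformly over all the degenerate parameter regimes --- those in which some Gamma factors in~\eqref{eq:B-defined} are poles or some ${}_2F_1$'s reduce to polynomials --- which is exactly what dictates the convention $1/(-i)!=0$ and the choice of summation limits. The $a\leftrightarrow b$ symmetry roughly halves the labour: it suffices to derive the first summand of $K_j$ carefully and obtain the second by relabelling $n_1\leftrightarrow n_2$.
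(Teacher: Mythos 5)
First, a contextual point: the paper does not prove Lemma~\ref{lm:2F1identity} at all --- it is imported verbatim from \cite[Lemma~2]{CKP}, and the only in-paper guidance is the sentence immediately preceding the statement, which describes exactly the recipe you adopt (Taylor-expand the identity \cite[eq.~(2.19)]{DK} multiplied by a power of $t$ times $(1-t)^p$). So your strategy is the intended one, and your final bookkeeping step is sound: reading off the coefficient of $t^k$ in the product of $(1-t)^p=\sum_i\binom{p}{i}(-t)^i$ with $t^{\overline{n}}\sum_{j\ge-\overline{n}}K_jt^{j}$ does reproduce the inner sum over $(k-p)_{+}-\overline{n}\le j\le k-\overline{n}$ with the stated signs; in fact the factor $(-1)^{\overline{n}}$ is then absorbed automatically (since $(-1)^{k-\overline{n}-j}=(-1)^{\overline{n}}(-1)^{k}(-1)^{j}$), so no separate ``normalisation point'' argument is needed.

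The genuine gap is the one you flag yourself as ``the main obstacle'': the entire mathematical content of the lemma is the identification of the coefficient of $t^{j+\overline{n}}$ in the core factor with the specific expression $K_j$ --- the prefactors $\frac{(1-a)_{j}(c-a)_{m+j}}{(b-a)_{n_2+j+1}(j+n_1)!}$ and the exact parameter arrays of the two terminating ${}_{4}F_{3}(1)$'s. Your proposal only asserts that the Cauchy products of the two ${}_2F_1$'s ``should yield'' these, without carrying out the computation, verifying the termination and the lower cutoff $j\ge-\overline{n}$ enforced by the convention $1/(-i)!=0$, or checking that the degenerate parameter regimes do not break the ${}_4F_3$ form. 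Until that is done, nothing is proved: what you have is a correct plan plus a correct outer bookkeeping step wrapped around an unexecuted core. One smaller discrepancy to reconcile while executing it: the paper's recipe multiplies by $t^{-\underline{n}}(1-t)^{p}$, whereas your expansion is organised around $t^{\overline{n}}(1-t)^{p}$; these differ by $t^{n_1+n_2}$, so you must track where that power is absorbed between the normalisation of \cite[eq.~(2.19)]{DK} and the prefactor $x^{l-\underline{n}-c}$ in \eqref{eq:2F1ratioboundary}.
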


At least one of the numbers~$a,b,c-a,c-b$ lies in $-\N_0$ exactly when~${ }_2F_1(a,b;c;z)$
degenerates to a polynomial, possibly times a fractional power of~$(1-z)$. In this case the
denominator in~\eqref{eq:2F1ratioboundary} in Theorem~\ref{th:2F1ratioboundary} may vanish for a
certain~$x>1$: for instance,
\[
    { }_2F_1\left(1,-2;\frac 45;\frac 65\right)
    ={ }_2F_1\left(-\frac 15,\frac{14}5;\frac 45;\frac 65\pm i0\right)=0.
\]
Runckel showed that this situation is impossible in the non-degenerate case. More specifically,
the following fact is a direct corollary of \cite[Lemma 2]{Runckel}:
\begin{lemma}\label{lemma:no-zeros-branch-cut}
    If~$a,b,c-a,c-b\notin-\N_0$ and~$x>1$, then~${ }_2F_1(a,b;c;x\pm i0)\ne 0$.
\end{lemma}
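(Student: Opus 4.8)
The statement is quoted from Runckel, but it admits a short self-contained proof. The plan is to combine the $z\mapsto1-z$ connection formulas at the branch point $z=1$ with the classical fact that the Wronskian of the hypergeometric equation never vanishes away from $\{0,1\}$. I take $a,b,c$ real (as elsewhere in the paper) and keep in mind that $c\notin-\N_0$ is tacitly assumed. Suppose first $c-a-b\notin\Z$. Near $z=1$ one has the standard decomposition ${}_2F_1(a,b;c;z)=\alpha\,v_1(z)+\beta\,v_2(z)$ into the two solutions of the hypergeometric equation with exponents $0$ and $c-a-b$ at $z=1$, namely $v_1(z)={}_2F_1(a,b;a+b-c+1;1-z)$ and $v_2(z)=(1-z)^{c-a-b}{}_2F_1(c-a,c-b;c-a-b+1;1-z)$, with $\alpha=\Gamma(c)\Gamma(c-a-b)/[\Gamma(c-a)\Gamma(c-b)]$ and $\beta=\Gamma(c)\Gamma(a+b-c)/[\Gamma(a)\Gamma(b)]$. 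The hypotheses $c-a,c-b\notin-\N_0$ and $a,b\notin-\N_0$ are exactly what make $\alpha\neq0$ and $\beta\neq0$, and this is the only place where they are used.

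Now continue $v_1,v_2$ to $z=x+i0$ along the upper bank. Since $1-x<0$ lies off the cut and the parameters are real, $v_1(x+i0)$ and the hypergeometric factor of $v_2(x+i0)$ are real numbers, whereas $(1-z)^{c-a-b}$ acquires the non-real phase $e^{-i\pi(c-a-b)}$. Hence, if ${}_2F_1(a,b;c;x+i0)=\alpha v_1(x+i0)+\beta v_2(x+i0)=0$, then taking imaginary parts forces $v_2(x+i0)=0$ (because $\beta\neq0$ and $\sin(\pi(c-a-b))\neq0$), and then $v_1(x+i0)=0$. But $v_1,v_2$ form a fundamental system, so their Wronskian is a nonzero multiple of $z^{-c}(1-z)^{c-a-b-1}$, which does not vanish for $x>1$; a common zero of $v_1$ and $v_2$ at $x$ would make it vanish. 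This contradiction gives ${}_2F_1(a,b;c;x+i0)\neq0$, and ${}_2F_1(a,b;c;x-i0)=\overline{{}_2F_1(a,b;c;x+i0)}\neq0$ by the Schwarz reflection principle.

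It remains to treat $c-a-b\in\Z$, where a logarithm appears. Here I would first apply Euler's transformation ${}_2F_1(a,b;c;z)=(1-z)^{c-a-b}{}_2F_1(c-a,c-b;c;z)$ --- its prefactor is nonzero on $(1,\infty)$, and the new parameter triple $(c-a,c-b,c)$ again avoids $-\N_0$ while $c'-a'-b'=-(c-a-b)$ --- to reduce to $c-a-b=m\in\N_0$. In this resonant case the connection formula (see \cite[\S15.8]{DLMF}) has the shape ${}_2F_1(a,b;c;z)=A_1P(1-z)+A_2(1-z)^m\big(Q(1-z)\log(1-z)+S(1-z)\big)$ with $P,Q,S$ power series with real coefficients ($Q(0)=1$; $P$ of degree $m-1$, empty when $m=0$) and $A_1,A_2$ ratios of Gamma functions that are nonzero precisely under the hypotheses. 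Then $V_1(z):=(1-z)^mQ(1-z)$ is the logarithm-free companion solution, and $\{V_1,\,{}_2F_1(a,b;c;\cdot)\}$ is a fundamental system with Wronskian non-vanishing on $(1,\infty)$. Continuing to $x+i0$ replaces $\log(1-z)$ by $\log(x-1)-i\pi$ and $(1-z)^m$ by a nonzero real number, so $\Im\,{}_2F_1(a,b;c;x+i0)=-\pi A_2V_1(x+i0)$. If ${}_2F_1(a,b;c;x+i0)=0$, its imaginary part gives $V_1(x+i0)=0$ (as $A_2\neq0$), after which its value --- now real, the logarithmic term having disappeared --- equals $A_1P(1-x)+A_2(1-x)^mS(1-x)$; its vanishing forces both members of the fundamental system to vanish at $x$, again impossible.

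The genuinely delicate part, where I expect most of the work, is this resonant case: pinning down the branch of $(1-z)^m\log(1-z)$ on the upper bank and checking that $A_1,A_2$ (and the single constant that replaces them when $m=0$) are nonzero exactly when none of $a,b,c-a,c-b$ lies in $-\N_0$; everything else is a routine Wronskian argument. One could alternatively start from the split Euler integral ${}_2F_1(a,b;c;x+i0)=\tfrac{\Gamma(c)}{\Gamma(b)\Gamma(c-b)}\big(\int_0^{1/x}+e^{i\pi a}\int_{1/x}^{1}\big)t^{b-1}(1-t)^{c-b-1}|1-xt|^{-a}\,dt$, whose two summands are strictly positive in the range $0<b<c$, $a<1$, but lifting it to arbitrary parameters by contiguous relations looks no simpler, so I would not pursue it.
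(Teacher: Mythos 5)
Your argument is correct in substance, but it takes a genuinely different route from the paper. The paper does not reprove the statement from first principles: it quotes the first assertion of Runckel's Lemma~2 (which covers $c-a-b\ge 0$, $b\ge a$) and disposes of the remaining cases in two lines, via the symmetry $a\leftrightarrow b$ and Euler's identity~\eqref{eq:Euler_ID}, whose prefactor $(1-z)^{c-a-b}$ cannot vanish on the banks of the cut. You instead give a self-contained proof that essentially reconstructs what lies behind Runckel's lemma: the $z\mapsto 1-z$ connection formula splits ${}_2F_1(a,b;c;x+i0)$ into a term that stays real on the upper bank and a term carrying the phase $e^{-i\pi(c-a-b)}$ (or the shift $\log(1-z)\mapsto\log(x-1)-i\pi$ in the resonant case); the hypotheses $a,b,c-a,c-b\notin-\N_0$ are exactly the non-vanishing of the two connection coefficients; and a common zero of two members of a fundamental system at a regular point $x>1$ is excluded by Abel's formula for the Wronskian. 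Your computations check out ($\Im(1-x-i0)^{c-a-b}=-(x-1)^{c-a-b}\sin\pi(c-a-b)\ne 0$ for $c-a-b\notin\Z$, and the Euler reduction sends $c-a-b\mapsto a+b-c$, so only $c-a-b=m\in\N_0$ needs the logarithmic formula). What the paper's route buys is brevity and complete avoidance of the resonant bookkeeping (digamma terms, branch of the logarithm), at the price of an external citation; what yours buys is a proof readable without Runckel's paper, which moreover re-derives, as a by-product, the boundary imaginary-part formula underlying Theorem~\ref{th:2F1ratioboundary} in the unshifted case. If you write it up in full, the points to nail down are the ones you already flag: the exact constants $A_1,A_2$ in the resonant connection formula (e.g.\ \cite[\S 15.8]{DLMF}), and the observation that $A_2\ne 0$ makes ${}_2F_1(a,b;c;\cdot)$ genuinely logarithmic at $z=1$ while $V_1$ is not, so the pair really is a fundamental system.
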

\begin{proof}
    The case~$c-a-b\ge 0$ and~$b\ge a$ is the first assertion of~\cite[Lemma 2]{Runckel}. Due to
    the symmetry with respect to exchanging~$a\leftrightarrow b$, the lemma also holds for the
    case~$c-a-b\ge 0$ and~$b\le a$. Now, if~$c-a-b\le 0$ and~$x>1$, the right hand side of
    Euler's identity
    \begin{equation} \label{eq:Euler_ID}
        {}_2F_1(a,b;c;z)=(1-z)^{c-a-b}{}_2F_1(c-a,c-b;c;z)
    \end{equation}
    does not vanish for~$z=x\pm i0$, and hence neither does the left hand side when~$x>1$.
\end{proof}

\section{Integral representations}\label{sec:boundary-values-n-integrals}

Our goal is to derive explicit integral representations for the ratio~$R_{n_1,n_2,m}(z)$. This
ratio is known to have at most finitely many poles in the cut plane~$\C\setminus[1,+\infty)$ and
on both banks of the branch cut~$(1,+\infty)$, see Theorem~\ref{th:2F1zeros} below. It may also
have at most a polynomial growth near the branch points~$z=1$ and~$z=\infty$. There are two
options for dealing with the poles and the growth: to subtract the corresponding rational
correction term, or to multiply~$R_{n_1,n_2,m}(z)$ by a specially tailored polynomial. We are
going to deal with both options via an adapted version of the Schwarz formula stated here as
Lemma~\ref{lemma:Schwarz_asympt}.

A function meromorphic in the (open) upper half of the complex plane is called real if it
extends as a meromorphic function to the (open) lower half of the complex plane according to the
rule:
\[
f(\overline z) = \overline{f(z)}
\]
wherever~$f(z)$ is defined. If~$f(z)$ is analytic near the origin, then~$f(z)$ is real precisely
when all coefficients of its Taylor expansion at the origin are real.

\begin{lemma}\label{lemma:Schwarz_asympt}
    Let~$f(z)$ be a real function meromorphic in the cut plane~$\C\setminus[1,+\infty)$ and
    analytic near the origin. Suppose that there exists a real polynomial~$q(z)$ of degree~$M$,
    for which~$q(z)f(z)$ is analytic in~$\C\setminus[1,+\infty)$ and~$q(x) u(x)$ is continuous
    for $x\in(1,+\infty)$, where~$u(x)\coloneqq\frac 1\pi \Im f(x + i0)$. If~$N\in\N_0$ is such
    that
    \begin{equation}\label{eq:Schwarz_Qasympt}
        \lim_{|z-1|\to 1}\big|q(z)(1-z)f(z)\big|=\lim_{|z|\to\infty}\big|f(z)z^{M-N}\big|=0
    \end{equation}
    and~$q(x)u(x)/x^{N+1}$ is absolutely integrable on~$(1,+\infty)$, then
    \begin{equation}\label{eq:Schwarz_Qplus}
        f(z)
        = \sum_{k=0}^{N-1}\frac{z^k}{q(z)}\sum_{j=0}^{k}\frac{q^{(k-j)}(0)f^{(j)}(0)}{(k-j)!j!}
        + \frac{z^N}{q(z)} \int_{1}^{+\infty}\frac{q(x)u(x)\,dx}{(x-z)x^N}.
    \end{equation}
\end{lemma}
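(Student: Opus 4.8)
The plan is to reduce this to the classical Schwarz--Herglotz integral formula applied to the auxiliary function $g(z) \coloneqq q(z)f(z)$, which by hypothesis is analytic in $\C\setminus[1,+\infty)$, real, and continuous up to the banks of the cut (since $q(x)u(x)$ is continuous there). First I would record the boundary behaviour: for $x\in(1,+\infty)$ one has $\Im g(x+i0) = q(x)\Im f(x+i0) = \pi q(x) u(x)$ because $q$ is real, so $g$ has a well-defined real density on the cut. The growth conditions in~\eqref{eq:Schwarz_Qasympt} translate into $|g(z)(1-z)|\to 0$ as $z\to 1$ (so no point mass at the finite branch point) and $|g(z)| = |q(z)f(z)| = O(|z|^{M}\cdot|f(z)|) = o(|z|^{N})$ as $z\to\infty$, which is exactly the polynomial growth bound one needs to write a Cauchy-type integral with $N$ subtractions.

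Next I would invoke the standard dispersion relation with $N$ subtractions at the origin: for a function $g$ analytic in $\C\setminus[1,+\infty)$, real, with continuous boundary density $\tfrac1\pi\Im g(x+i0) = q(x)u(x) \eqqcolon v(x)$, and with $g(z) = o(z^N)$ at infinity and the vanishing condition at $z=1$, Cauchy's theorem on a large contour hugging the cut gives
\begin{equation*}
g(z) = \sum_{k=0}^{N-1} c_k z^k + z^N \int_1^{+\infty} \frac{v(x)\,dx}{(x-z)x^N},
\end{equation*}
where $c_k = g^{(k)}(0)/k!$ is the $k$-th Taylor coefficient of $g$ at the origin. The integral converges absolutely precisely because $q(x)u(x)/x^{N+1}$ is assumed integrable on $(1,+\infty)$, and the contour argument is legitimate because the $O(z^M)|f|$ growth combined with $M-N<0$ after dividing by $z^N$ kills the arc at infinity, while the factor $(1-z)$ vanishing kills the small circle around $z=1$. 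Dividing through by $q(z)$ then yields
\begin{equation*}
f(z) = \frac{1}{q(z)}\sum_{k=0}^{N-1} c_k z^k + \frac{z^N}{q(z)} \int_1^{+\infty} \frac{q(x)u(x)\,dx}{(x-z)x^N}.
\end{equation*}
It remains only to identify $c_k$ in terms of the data of $f$: since $g = qf$ and both are analytic near $0$, the Leibniz rule gives $c_k = \frac{g^{(k)}(0)}{k!} = \frac{1}{k!}\sum_{j=0}^k \binom{k}{j} q^{(k-j)}(0) f^{(j)}(0) = \sum_{j=0}^k \frac{q^{(k-j)}(0) f^{(j)}(0)}{(k-j)!\,j!}$, which is exactly the inner sum in~\eqref{eq:Schwarz_Qplus}. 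This completes the derivation.

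The main obstacle I anticipate is the careful justification of the contour-deformation step — specifically, verifying that the integral over the large circle of radius $\rho$ tends to zero. One must check that $\int_{|z|=\rho} \frac{g(z)}{z^N(z-w)}\,dz \to 0$; here the integrand is $O(\rho^{-2}\cdot \rho^M |f(\rho e^{i\theta})|)$, and since $|f(z)z^{M-N}|\to 0$ this is $o(\rho^{-1})$ times the circumference $2\pi\rho$, hence $o(1)$. A second subtlety is that the subtracted polynomial $\sum c_k z^k$ must be real (so that $f$ comes out real), which follows because $g$ is real and hence has real Taylor coefficients at the origin — this is where the hypothesis that $q$ and $f$ are both real is used. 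One should also note that $q(z)$ may have zeros inside $\C\setminus[1,+\infty)$, but these are precisely the poles of $f$, so dividing by $q(z)$ at the end is harmless as an identity of meromorphic functions and the displayed formula is to be read as such.
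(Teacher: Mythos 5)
Your proposal is correct and takes essentially the same route as the paper: pass to $g(z)=q(z)f(z)$, run a Cauchy-type contour argument around the cut with $N$ subtractions at the origin (the two limits in~\eqref{eq:Schwarz_Qasympt} killing the small circle at $z=1$ and the large circle, and the integrability of $q(x)u(x)/x^{N+1}$ guaranteeing convergence on the banks), then divide by $q(z)$ and expand $g^{(k)}(0)$ by the Leibniz rule. The only differences are cosmetic: the paper obtains the subtracted representation coefficient-by-coefficient (Cauchy's formula for $g^{(N+k)}(0)$, resummation of $\sum_k c_kz^k$ in the unit disc, then analytic continuation) rather than from a single integral of $g(\zeta)/\bigl(\zeta^N(\zeta-z)\bigr)$, and your bound $O(\rho^{-2}\rho^{M}|f|)$ on the large circle should read $O(\rho^{-N-1}\rho^{M}|f|)$ --- though the conclusion $o(\rho^{-1})$ you draw from it is the correct one.
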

\begin{proof}
    Let~$C$ be the closed contour consisting of a small circle around the point~$z=1$ of
    radius~$\epsilon<1/2$, then (the upper bank of) the interval~$(1+\epsilon+i0,1/\epsilon+i0)$
    followed by a large circle~$|z|=1/\epsilon$ and (the lower bank of) the
    interval~$(1+\epsilon-i0,1/\epsilon-i0)$. The contour is traversed so that the bounded
    domain inside it is on the left (in particular, the large circle is traversed in the
    anticlockwise direction). Given~$k\in\N_0$, the Cauchy formula for the $(N+k)$th Taylor
    coefficient of~$g(z)\coloneqq q(z)f(z)$ reads
    \begin{equation} \label{eq:Cauchy_for_Schwarz}
        \begin{aligned}
            2\pi i\frac{g^{(N+k)}(0)}{(N+k)!}
            &= \oint_C \frac{g(z)}{z^{N+k+1}} dz
            \\
            &=\oint_{|z-1|=\epsilon} \frac{g(z) \,dz}{z^{N+k+1}}
            +\int_{1+\epsilon}^{1/\epsilon} \frac{g(x+i0)-g(x-i0)}{x^{N+k+1}}dx
            +\oint_{|z|=1/\epsilon} \frac{g(z) \,dz}{z^{N+k+1}}
            ,
        \end{aligned}
    \end{equation}
    note also that
    \[
        g(x+i0)-g(x-i0)=2i\Im g(x+i0) = 2\pi i q(x)u(x).
    \]
    On letting~$\epsilon\to+0$, the first and the last integrals on the right-hand side
    of~\eqref{eq:Cauchy_for_Schwarz} vanish due to~\eqref{eq:Schwarz_Qasympt}. Hence,
    \[
        \frac{g^{(N+k)}(0)}{(N+k)!}
        =
        \frac1{\pi}\int_{1}^{+\infty} \frac{\Im g(x+i0)}{x^{N+k+1}}dx
        =
        \int_{1}^{+\infty} \frac{q(x)u(x)\,dx}{x^{N+k+1}}
        \eqqcolon c_k
        ,\quad k\in\N_0.
    \]
    The power series~$\sum_{k=0}^\infty c_k z^k$ uniformly converges on compact subsets of the
    unit disc (in fact,~$c_k\to 0$ as~$k\to\infty$ since for each~$x>1$ the integrand
    monotonically tends to zero). Therefore, if~$|z|<1$ we have
    \begin{equation}\label{eq:Schwarz_series}
        \sum_{k=0}^{\infty}c_kz^k
        = \int_{1}^{+\infty}\left(
            \sum_{k=0}^{\infty}\frac{z^k}{x^{k+1}}\right)\frac{q(x)u(x)\,dx}{x^N}
        =
        \int_{1}^{+\infty}\frac{q(x)u(x)\,dx}{(x-z)x^N}.
    \end{equation}
    A comparison between the left-hand side and the Taylor expansion of~$g(z)$ at the origin
    shows that
    \begin{equation*}
        z^N\sum_{k=0}^{\infty}c_kz^k
        =
        g(z)-\sum_{k=0}^{N-1}\frac{z^k}{k!}g^{(k)}(0)
    \end{equation*}
    for~$z$ inside the unit disc. At the same time, the ratio~$x/(x-z)$ is bounded in~$z$ on
    compact subsets of~$\C\setminus[1,+\infty)$ uniformly in~$x>1$, so the integral on the
    right-hand side of~\eqref{eq:Schwarz_series} uniformly converges there to an analytic
    function. Thus, for all~$z\in\C\setminus[1,+\infty)$
    \begin{equation*}
        g(z)
        =\sum_{k=0}^{N-1}\frac{z^k}{k!}g^{(k)}(0)
        + z^N\int_{1}^{+\infty}\frac{q(x)u(x)\,dx}{(x-z)x^N}
        .
    \end{equation*}
    This expression yields~\eqref{eq:Schwarz_Qplus} after division by~$q(z)$ and the
    substitutions~$g(z)=q(z)f(z)$ and
    \[
        g^{(k)}(0)
        =\sum_{j=0}^{k}\binom{k}{j}q^{(k-j)}(0)f^{(j)}(0)
        .
    \]
\end{proof}

\begin{corollary}\label{cr:Schwarz_asympt}
    Let~$f(z)$ be a real analytic function defined in the cut plane~$\C\setminus[1,+\infty)$
    such that $u(x)\coloneqq\frac 1\pi \Im f(x + i0)$ is continuous on $(1,+\infty)$. Suppose
    also that there exist~$M,N\in\N_0$ for which
    \begin{equation}\label{eq:Schwarz_asympt}
        \lim_{|z-1|\to 1}\big|f(z)(1-z)^{M+1}\big|=\lim_{|z|\to\infty}\big|f(z)z^{M-N}\big|=0
    \end{equation}
    and~$u(x)(x-1)^{M}/x^{N+1}$ is absolutely integrable over~$(1,+\infty)$. Then
    \begin{equation}\label{eq:Schwarz_plus}
        f(z)
        = \sum_{k=0}^{N-1}
        \frac{z^k}{(1-z)^M}\!\sum_{j={\max\{k-M,0\}}}^k\!(-1)^{k-j}\binom{M}{k-j}\frac{f^{(j)}(0)}{j!}
        + \frac{z^N}{(1-z)^M} \int_{1}^{+\infty}\frac{(1-x)^M \,u(x)\,dx}{(x-z)x^N}.
    \end{equation}
\end{corollary}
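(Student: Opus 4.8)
The plan is to apply Lemma~\ref{lemma:Schwarz_asympt} with the specific choice $q(z)=(1-z)^M$ and then simplify the resulting formula. Since $q$ has degree exactly $M$ (its leading coefficient is $(-1)^M\neq0$) and real coefficients, this is an admissible choice provided the remaining hypotheses of the lemma hold.

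First I would check those hypotheses. As $f$ is analytic in $\C\setminus[1,+\infty)$ and near the origin, the product $q(z)f(z)$ is analytic in $\C\setminus[1,+\infty)$, and $q(x)u(x)=(1-x)^Mu(x)$ is continuous on $(1,+\infty)$ because $u$ is. The first limit in~\eqref{eq:Schwarz_Qasympt} becomes $\lim_{|z-1|\to1}\big|(1-z)^{M+1}f(z)\big|=0$ and the second becomes $\lim_{|z|\to\infty}\big|f(z)z^{M-N}\big|=0$; these are precisely the two conditions in~\eqref{eq:Schwarz_asympt}. Finally, since $|(1-x)^M|=(x-1)^M$ for $x>1$, the function $q(x)u(x)/x^{N+1}$ is absolutely integrable on $(1,+\infty)$ exactly because $u(x)(x-1)^M/x^{N+1}$ is, which is assumed. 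Hence Lemma~\ref{lemma:Schwarz_asympt} yields~\eqref{eq:Schwarz_Qplus} for this $q$.

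It then remains to rewrite~\eqref{eq:Schwarz_Qplus} in the stated form. Expanding $(1-z)^M=\sum_{i\ge0}\binom{M}{i}(-1)^iz^i$ (with the convention $\binom{M}{i}=0$ for $i>M$) gives $q^{(i)}(0)/i!=(-1)^i\binom{M}{i}$, so for each $k$
\[
\sum_{j=0}^{k}\frac{q^{(k-j)}(0)f^{(j)}(0)}{(k-j)!\,j!}
=\sum_{j=0}^{k}(-1)^{k-j}\binom{M}{k-j}\frac{f^{(j)}(0)}{j!}
=\sum_{j=\max\{k-M,0\}}^{k}(-1)^{k-j}\binom{M}{k-j}\frac{f^{(j)}(0)}{j!},
\]
the last equality holding because $\binom{M}{k-j}=0$ whenever $k-j>M$. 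Substituting this, together with $q(z)=(1-z)^M$ in the denominators and $q(x)u(x)=(1-x)^Mu(x)$ in the integrand, into~\eqref{eq:Schwarz_Qplus} produces~\eqref{eq:Schwarz_plus}; one checks that the degenerate value $M=0$ (where the inner sum collapses to $f^{(k)}(0)/k!$) is covered as well.

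Because the statement is a direct specialization of Lemma~\ref{lemma:Schwarz_asympt}, there is no substantive obstacle. The only points that deserve a little care are matching the ``behaviour near $z=1$'' hypothesis of the corollary with that of the lemma once the factor $(1-z)^M$ has been absorbed into $q$, and correctly bookkeeping the binomial coefficients arising from the Taylor expansion of $q$.
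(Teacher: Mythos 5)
Your proof is correct and follows exactly the route the paper takes: substitute $q(z)=(1-z)^M$ into Lemma~\ref{lemma:Schwarz_asympt}, verify that the hypotheses~\eqref{eq:Schwarz_Qasympt} reduce to~\eqref{eq:Schwarz_asympt}, and rewrite the inner sum using $q^{(i)}(0)/i!=(-1)^i\binom{M}{i}$ with the terms $k-j>M$ vanishing. The only difference is that you spell out the hypothesis-checking, which the paper leaves implicit.
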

\begin{proof}
    Put~$q(z)\coloneqq (1-z)^M$ in Lemma~\ref{lemma:Schwarz_asympt}. Then, due to
    \begin{equation*}
      \frac{z^k}{q(z)}\sum_{j=0}^{k}q^{(k-j)}(0)\frac{f^{(j)}(0)}{(k-j)!j!}
      =\frac{z^k}{(1-z)^M}
      \!\sum_{j={\max\{k-M,0\}}}^k\!\frac{M!(-1)^{k-j}}{(M-(k-j))!}\cdot\frac{f^{(j)}(0)}{(k-j)!j!}
      ,
  \end{equation*}
  the expression in ~\eqref{eq:Schwarz_Qplus} becomes~\eqref{eq:Schwarz_plus}.
\end{proof}

One can also reshape the sum in  formula~\eqref{eq:Schwarz_plus} as follows:
\[
    \sum_{k=0}^{N-1} \frac{z^k}{(1-z)^M}
    \!\sum_{j={\max\{k-M,0\}}}^k\!(-1)^{k-j}\binom{M}{k-j}\frac{f^{(j)}(0)}{j!}
    =\sum_{j=0}^{N-1}\frac{f^{(j)}(0)}{j!}
    \sum_{k=0}^{\!\min\{N-j-1,M\}\!}\!\binom{M}{k}\frac{(-1)^{k}z^{k+j}}{(1-z)^M}
    .
\]

\begin{theorem}\label{th:2F1ratio-gen-repr}
    Suppose that~$a,b,c\in\R$ and~$n_1,n_2,m\in\Z$, where~$-c,-c-m\notin\N_0$. Choose%
    \footnote{Such rational functions~$Q(z)$ and~$T(z)$ always exist, as~$R_{n_1,n_2,m}(z)$ may
        have at most finitely many poles, see Theorem~\ref{th:2F1zeros}.}
    a real rational function~$Q(z)$ and a real polynomial~$T(z)$ such
    that~$T(z)\big(R_{n_1,n_2,m}(z)-Q(z)\big)$ is analytic in~$\C\setminus[1,+\infty)$ and
    \begin{equation}\label{eq:asymp_b}
        \lim_{y\to+0}\big|T(x+iy)\big|\cdot\big|R_{n_1,n_2,m}(x+iy)-Q(x+iy)\big|<\infty
        \quad
        \text{for each}\quad
        x\in(1,+\infty).
    \end{equation}
    Let also~$Q(z)$ be analytic near the origin. Take numbers~$M,N\in\N_0$ such that
    \begin{equation}\label{eq:asympinf1}
        \begin{aligned}
        R_{n_1,n_2,m}(z)-Q(z)&=o\left((1-z)^{-M-1}\right)~\text{as}~z\to 1
        \text{~~and}\\
        R_{n_1,n_2,m}(-z)-Q(-z)&=o(z^{N-M})~\text{as}~z\to\infty,
        \end{aligned}
    \end{equation}
    and denote~$d=\deg T$. Then the following representation holds
    \begin{multline}\label{eq:mainrepresentionQN}
        R_{n_1,n_2,m}(z)=Q(z)
        \\
        +
        \sum_{k=0}^{N+d-1} \!\frac{z^k}{(1-z)^MT(z)}
        \sum_{j=0}^k
        \frac{R_{n_1,n_2,m}^{(j)}(0)-Q^{(j)}(0)}{j!}
        % \frac{1}{j!}\frac{d^{j}}{dz^{j}}\big(R_{n_1,n_2,m}(z)-Q(z)\big)\Big|_{z=0}\!\!\!
        \sum_{h=0}^{\min\{k-j,M\}}\!\binom{M}{h}\frac{(-1)^{h}T^{(k-j-h)}(0)}{(k-j-h)!}
        \\
        + \frac{B_{n_1,n_2,m}(a,b,c) z^{N+d}}{(z-1)^MT(z)} \int_1^{\infty}
        \frac{x^{l-\underline{n}-c-N-d}(x-1)^{M+c-a-b-l}T(x)P_{r}(1/x)}{|{}_{2}F_{1}(a,b;c;x)|^{2}(x-z)}dx,
    \end{multline}
    where $r,l,\underline{n}$ and $B(a,b,c)$ are the same as in
    Theorem~\ref{th:2F1ratioboundary} and~$P_r$ is defined in \eqref{eq:Prexplicit}. In the
    case~$N=d=0$, the sum in~$k$ on the right-hand side of~\eqref{eq:mainrepresentionQN} is
    void.
\end{theorem}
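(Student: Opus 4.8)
The plan is to apply Lemma~\ref{lemma:Schwarz_asympt} to the real function $f(z)\coloneqq R_{n_1,n_2,m}(z)-Q(z)$ with the real polynomial $q(z)\coloneqq(1-z)^MT(z)$, then to insert the explicit boundary value of $\Im R_{n_1,n_2,m}$ furnished by Theorem~\ref{th:2F1ratioboundary}, and finally to simplify. First I would record the structural facts. Since $-c,-c-m\notin\N_0$, both hypergeometric functions in $R_{n_1,n_2,m}$ are analytic near the origin and equal $1$ there, so $R_{n_1,n_2,m}$ is analytic near $0$ with real Taylor coefficients (as $a,b,c\in\R$); combined with the real rational $Q$, analytic near $0$ by hypothesis, this makes $f$ a real function analytic near the origin. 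Moreover $g(z)\coloneqq q(z)f(z)=(1-z)^M\cdot T(z)\bigl(R_{n_1,n_2,m}(z)-Q(z)\bigr)$ is analytic in $\C\setminus[1,+\infty)$, the first factor being a polynomial and the second analytic there by hypothesis, while $\deg q=M+\deg T=M+d$.

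Next I would identify $u(x)\coloneqq\tfrac1\pi\Im f(x+i0)$. On $(1,+\infty)$ the rational function $Q$ takes real values (any pole of $Q$ there is forced by~\eqref{eq:asymp_b} to be cancelled by a zero of $T$, so it does not contribute to $\Im g$), hence $u(x)=\tfrac1\pi\Im R_{n_1,n_2,m}(x+i0)$ and Theorem~\ref{th:2F1ratioboundary} yields
\[
q(x)u(x)=(1-x)^MT(x)\,B_{n_1,n_2,m}(a,b,c)\,\frac{x^{l-\underline n-c}(x-1)^{c-a-b-l}P_r(1/x)}{|{}_2F_1(a,b;c;x)|^2},\qquad x>1.
\]
This is continuous on $(1,+\infty)$: the numerator is real-analytic there and the denominator is non-vanishing unless ${}_2F_1(a,b;c;\cdot)$ degenerates to a polynomial times a power of $1-z$ (Lemma~\ref{lemma:no-zeros-branch-cut}); in that case a zero $x_0>1$ of ${}_2F_1(a,b;c;\cdot)$ is a pole of $R_{n_1,n_2,m}(x\pm i0)$, and hence of $u$, whose order is at most that of $R_{n_1,n_2,m}(x\pm i0)$ and is therefore killed by the zero of $T$ forced at $x_0$ by~\eqref{eq:asymp_b}.

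I would then verify the hypotheses of Lemma~\ref{lemma:Schwarz_asympt} with $N_{\mathrm{lem}}\coloneqq N+d$, so that $\deg q-N_{\mathrm{lem}}=M-N$. The condition at $z=1$ in~\eqref{eq:Schwarz_Qasympt} reads $\lim_{z\to1}\bigl|(1-z)^{M+1}T(z)(R_{n_1,n_2,m}(z)-Q(z))\bigr|=0$ and follows at once from the first relation in~\eqref{eq:asympinf1} and the boundedness of $T$ at $z=1$; the condition at infinity, $\lim_{|z|\to\infty}\bigl|f(z)z^{M-N}\bigr|=0$, follows from the second relation in~\eqref{eq:asympinf1} together with the power-type growth of $R_{n_1,n_2,m}$ in the cut plane recorded in Lemma~\ref{lm:asymptotic}. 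For the absolute integrability of $q(x)u(x)/x^{N+d+1}$ on $(1,+\infty)$ one reads off from the displayed formula that the integrand is comparable to $(x-1)^{M+c-a-b-l}$ near $x=1^{+}$ and to $x^{M-N-1+\zeta(a+n_1,b+n_2,c+m)-\zeta(a,b,c)}[\log x]^{\varepsilon_\infty}$ near $x=+\infty$; both are integrable precisely because $M,N$ obey~\eqref{eq:asympinf1}, once the exponents supplied by Theorem~\ref{th:2F1ratioboundary} are matched with those of Lemma~\ref{lm:asymptotic} via $c-a-b-l=\eta(a+n_1,b+n_2,c+m)-\eta(a,b,c)$ and its analogue at infinity. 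I expect this matching of the exponents of Theorem~\ref{th:2F1ratioboundary} with the asymptotic exponents of Lemma~\ref{lm:asymptotic}, so that~\eqref{eq:asympinf1} becomes exactly the input demanded by Lemma~\ref{lemma:Schwarz_asympt}, to be the main technical point; the rest is bookkeeping.

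Lemma~\ref{lemma:Schwarz_asympt} then gives, after division by $q$,
\[
f(z)=\sum_{k=0}^{N+d-1}\frac{z^k}{q(z)}\sum_{j=0}^{k}\frac{q^{(k-j)}(0)f^{(j)}(0)}{(k-j)!\,j!}+\frac{z^{N+d}}{q(z)}\int_1^{+\infty}\frac{q(x)u(x)\,dx}{(x-z)\,x^{N+d}},
\]
valid as an identity of meromorphic functions in $\C\setminus[1,+\infty)$. Expanding $q^{(k-j)}(0)$ by the Leibniz rule, using $\bigl[(1-z)^M\bigr]^{(h)}\big|_{z=0}=(-1)^hM!/(M-h)!$ for $0\le h\le M$ and $0$ for $h>M$, converts the double sum into the triple sum of~\eqref{eq:mainrepresentionQN} (with $f^{(j)}(0)=R_{n_1,n_2,m}^{(j)}(0)-Q^{(j)}(0)$); inserting the explicit $q(x)u(x)$ from above together with $(1-x)^M=(-1)^M(x-1)^M$ and $(1-z)^M=(-1)^M(z-1)^M$ into the integral, the sign cancelling, produces its last term. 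Adding $Q(z)$ to both sides yields~\eqref{eq:mainrepresentionQN}; and for $N=d=0$ the outer sum is $\sum_{k=0}^{-1}$, i.e.\ void, as stated.
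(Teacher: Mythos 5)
Your overall route is the same as the paper's: apply Lemma~\ref{lemma:Schwarz_asympt} to $f=R_{n_1,n_2,m}-Q$ with $q(z)=(1-z)^MT(z)$ and $N_{\mathrm{lem}}=N+d$, expand $q^{(k-j)}(0)$ by the Leibniz rule, and substitute the boundary imaginary part from Theorem~\ref{th:2F1ratioboundary} into the Cauchy-type integral. That part of your write-up, including the sign bookkeeping $(1-x)^M=(-1)^M(x-1)^M$, is fine.

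The gap is in your verification of the absolute integrability hypothesis of Lemma~\ref{lemma:Schwarz_asympt}, which you yourself flag as ``the main technical point'' but then dispatch incorrectly. You claim the integrand behaves like $x^{M-N-1+\zeta(a+n_1,b+n_2,c+m)-\zeta(a,b,c)}[\log x]^{\varepsilon_\infty}$ at infinity and is ``integrable precisely because $M,N$ obey~\eqref{eq:asympinf1}.'' But \eqref{eq:asympinf1} is only a little-$o$ condition: with the power--log asymptotics of Lemma~\ref{lm:asymptotic} it permits the borderline case where the power exponent equals $N-M$ and $\varepsilon_\infty=-1$, in which case your claimed integrand is of order $x^{-1}(\log x)^{-1}$ --- \emph{not} integrable at infinity; the same problem occurs at $x=1^+$ with $(x-1)^{-1}|\log(x-1)|^{-1}$. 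What saves the argument, and what the paper's proof actually uses (delegating the computation to \cite[Lemma~2.5 and Proof of Theorem~2.12]{DK}), is that the \emph{imaginary part} of $f$ on the cut decays by an additional factor of $\log^{-1}$ relative to $|f|$: the $o(1)$ in $f(z)z^{M-N}$ forces at worst $O(1/\log x)$, and taking the imaginary part upgrades this to $O(1/\log^2 x)$, so the integrand is $O\big(x^{-1}|\log x|^{-2}\big)$, which is integrable. Your asymptotic conflates the log-exponent of $|R|$ (the $\varepsilon_\infty$ of Lemma~\ref{lm:asymptotic}) with that of $\Im R(x+i0)$ (governed by $P_r$ and $|{}_2F_1|^{-2}$ in Theorem~\ref{th:2F1ratioboundary}), and it is exactly this discrepancy that makes the borderline cases work. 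Without that extra logarithmic gain the theorem's hypotheses, as stated, would not suffice, so this step needs to be made explicit rather than asserted.
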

As no poles can be produced by the Cauchy-type integral in~\eqref{eq:mainrepresentionQN}, each
pole of~$R_{n_1,n_2,m}(z)$ excluding a possible one at~$z=1$ must be a pole of~$Q(z)$ or a zero
of~$T(z)$ of the same multiplicity. In fact, one can always eliminate one of the
functions~$Q(z),T(z)$ by setting either~$Q(z)\equiv 0$ or~$T(z)\equiv 1$, so that the other
function absorbs the poles of~$R_{n_1,n_2,m}(z)$.

\begin{proof}
    Apply Lemma~\ref{lemma:Schwarz_asympt} with~$f(z)=R_{n_1,n_2,m}(z)-Q(z)$
    and~$q(z)=(1-z)^M T(z)$, then observe that
    \[
        \frac{d^j}{dz^j}(1-z)^M\Big|_{z=0}
        =
        \begin{cases}
            {(-1)^jM!}/{(M-j)!},& \text{if}\quad j\le M;\\
            0,& \text{if}\quad j>M,
        \end{cases}
    \]
    and hence
    \[
    \begin{aligned}
        \frac{q^{(k-j)}(0)}{(k-j)!}
        &=
        \frac{d^{k-j}}{dz^{k-j}}\frac{(1-z)^MT(z)}{(k-j)!}\Big|_{z=0}
        =\frac 1{(k-j)!}\! \sum_{h=0}^{\min\{k-j,M\}}\binom{k-j}{h}\frac{(-1)^{h}M!}{(M-h)!}
        T^{(k-j-h)}(0)
        \\
        &=\sum_{h=0}^{\min\{k-j,M\}}\binom{M}{h}\frac{(-1)^{h}T^{(k-j-h)}(0)}{(k-j-h)!}
        .
    \end{aligned}
    \]
    The only detail we have to take care of is that~$\Im\big(f(x+i0)\big)(1-x)^M T(x)/x^{N+d+1}$
    must be absolutely integrable on~$(1,+\infty)$.

    Due to~\eqref{eq:asymp_b}, it is enough to check the integrability
    of~$\Im\big(f(x+i0)\big)(1-x)^M/x^{N+1}$ as~$x\to 1^+$ and~$x\to +\infty$. For~$x\to+\infty$
    this property holds: the leading term of~$f(x+i0)x^{M-N}=o(1)$ is no worse
    than~$O(1/\log(x))$
    yielding~$\Im\big(f(x+i0)\big)(1-x)^M/x^{N+1}=O\big(x^{-1}|\log(x)|^{-2}\big)$ which is integrable. Details can be found
    in~\cite[Proof of Theorem~2.12]{DK}.

    A similar reasoning applies for~$x\to 1^+$: from~\cite[Lemma~2.5]{DK} one obtains that the
    leading term of~$f(x+i0)(1-x)^M$ is no worse than~$O\big((1-x)^{-1}/\log(x-1)\big)$, which
    implies~$\Im\big(f(x+i0)\big)(1-x)^M/x^{N+1}=O\big((x-1)^{-1}|\log(x-1)|^{-2}\big)$ confirming integrability.
\end{proof}

Let us illustrate Theorem~\ref{th:2F1ratio-gen-repr} by (essentially) repeating the course of
the proof for a particular case in more detail. Given real parameters $a,b,c$, Runckel's
theorem (see Theorem~\ref{th:2F1zeros} below) gives an explicit formula for computing the number
of poles of~$R_{n_1,n_2,m}(z)$ in the cut plane $\C\setminus[1,\infty)$ and on the banks of the
branch cut. As the function ${}_2F_1(a,b;c;z)$ takes complex conjugate values at complex conjugate
points, non-real zeros come in complex conjugate pairs. Denote the zeros in the upper half plane
by $\alpha_1,\ldots,\alpha_k$ and the real zeros by $\beta_{1},\ldots,\beta_{l}$, so that total
number of zeros is $d=2k+l$. Then the function
\[
\begin{aligned}
    \hat{R}_{n_1,n_2,m}(z) &=\frac{T(z)}{z^{d}}R_{n_1,n_2,m}(z),
    \\[3pt]
    \text{where}~~
    T(z)
    &=(z-\alpha_{1})(z-\overline{\alpha}_{1})\cdots(z-\alpha_{k})(z-\overline{\alpha}_{k})(z-\beta_{1})\cdots(z-\beta_{l})
    \\
    &=z^d+\gamma_{d-1}z^{d-1}+\cdots+\gamma_{1}z+\gamma_{0},
\end{aligned}
\]
has the same asymptotics at $z=\infty$ as $R_{n_1,n_2,m}(z)$ and has no poles other then the
pole of order $d$ at $z=0$. Hence, this pole can be removed by subtracting the principal part of
the Laurent series of~$\hat{R}_{n_1,n_2,m}(z)$ at $z=0$. This leads to the definition:
$$
\bar{R}_{n_1,n_2,m}(z)=\hat{R}_{n_1,n_2,m}(z)-L(z),
\quad\text{where}\quad
L(z)=\frac{A_1}{z}+\frac{A_2}{z^2}+\cdots+\frac{A_d}{z^d}.
$$
Then, in view of $R_{n_1,n_2,m}(0)=1$, we will have
\begin{equation*}
\begin{aligned}
    A_{d}{}=&\lim\limits_{z\to0}[\hat{R}_{n_1,n_2,m}(z)z^d]
    =\gamma_0=(-1)^{2k+l}|\alpha_{1}|^2\cdots|\alpha_{k}|^2\beta_{1}\cdots\beta_{l},
    \\
    A_{d-1}{}=&\lim\limits_{z\to0}\frac{d}{dz}[\hat{R}_{n_1,n_2,m}(z)z^d]
    =[T(z)\frac{d}{dz}R_{n_1,n_2,m}(z)]_{|z=0}+[R_{n_1,n_2,m}(z)\frac{d}{dz}T(z)]_{|z=0}
    \\
    &=\gamma_{1}+\gamma_{0}\left(\frac{(a+n_{1})(b+n_{2})}{(c+m)}-\frac{ab}{c}\right),
    \\
    A_{d-j}{}=&\lim\limits_{z\to0}\frac{d^j}{dz^j}[\hat{R}_{n_1,n_2,m}(z)z^d]
    =\sum\limits_{n=0}^{j}\binom{j}{n} [R_{n_1,n_2,m}^{(n)}(z)T^{(j-n)}(z)]_{|z=0}
    \\
    {}=&j!\sum\limits_{n=0}^{j}
    \frac{R_{n_1,n_2,m}^{(n)}(0)}{n!}\gamma_{j-n},\qquad j=2,\ldots,d-1,
\end{aligned}
\end{equation*}
where $\gamma_{j}$ are the coefficients of the polynomial $T(z)$.

There are several ways to compute the $n$-th derivative of the quotient $R_{n_1,n_2,m}(z)$ based
on Fa\`{a} di Bruno's formula. One particularly simple version suggested by Al-Jamal
\cite{AlJamal} reads:
\[
R_{n_1,n_2,m}^{(n)}(z)
=\sum_{k=0}^{n}(-1)^{k}\binom{n+1}{k+1}
\frac{\big({}_2F_1(a+n_1,b+n_2;c+m;z)[{}_2F_1(a,b;c;z)]^k\big)^{(n)}} {[{}_2F_1(a,b;c;z)]^{k+1}}.
\]
This can be combined with Leibnitz's rule \cite[(1.1.1.5)]{Brychkov} to get
\begin{multline*}
\big({}_2F_1(a+n_1,b+n_2;c+m;z)[{}_2F_1(a,b;c;z)]^k\big)^{(n)}
\\
=n!\!\!\sum_{i_0+i_1+\cdots+i_k=n}\!\!
\frac{(a+n_1)_{i_0}(b+n_2)_{i_0}}{i_0!\,(c+m)_{i_0}}{}_2F_1(a+n_1+i_0,b+n_2+i_0;c+m+i_0;z)
\\[-5pt]
\times\prod_{h=1}^k\frac{(a)_{i_h}(b)_{i_h}}{i_h!\,(c)_{i_h}}{}_2F_1(a+i_h,b+i_h;c+i_h;z)
.
\end{multline*}

Next we note that the asymptotic behaviour of $\bar{R}_{n_1,n_2,m}(z)$ in the neighbourhood of
the point $z=1$ is the same as of $R_{n_1,n_2,m}(z)$, namely
$$
|\bar R_{n_1,n_2,m}(z)|
  \sim |R_{n_1,n_2,m}(z)|
  \sim |L_1|\cdot|1-z|^{\eta(a+n_1,b+n_2,c+m)-\eta(a,b,c)}
  \,[\log(1-z)]^{\varepsilon_1}
  ~~~\text{as}~z\to1
$$
with~$\eta$ defined by~\eqref{eq:asymp_1_eta}, ~$\varepsilon_1\in\{-1,0,1\}$ and~$L_1\ne 0$, see Lemma~\ref{lm:asymptotic}. Assume
that~$\eta(a+n_1,b+n_2,c+m)>\eta(a,b,c)-1$, so that a possible singularity at~$z=1$ is
integrable. In the neighbourhood of $z=\infty$, let us assume that for some~$\tau>0$
and~$C\in\R$
$$
R_{n_1,n_2,m}(z)=Q(z)+\frac{C}{\log(z)}\left(1+O\left([\log(z)]^{-1}\right)\right)+O(z^{-\tau}),~~~~~z\to\infty.
$$
If $Q(z)=r_sz^s+\cdots+r_1z+r_0$, then
\begin{align*}
\frac{Q(z)T(z)}{z^{d}}
&=r_sz^s+(r_{s-1}+r_s\gamma_{d-1})z^{s-1}+(r_{s-2}+r_{s-1}\gamma_{d-1}+r_{s}\gamma_{d-2})z^{s-1}+\cdots+r_0\gamma_0z^{-d}
\\
&=\hat{Q}(z)+O(z^{-1}),
\end{align*}
so that
$$
\bar{R}_{n_1,n_2,m}(z)
=\hat{Q}(z)+\frac{C}{\log(z)}\left(1+O\left([\log(z)]^{-1}\right)\right)+O(z^{-\tau}),~~~~~z\to\infty.
$$
As $\bar{R}_{n_1,n_2,m}(z)$ has no singularities in $\C\setminus[1,\infty)$ and is continuous on
the branch cut, we can apply the Schwarz formula (e.g. in the form of
Lemma~\ref{lemma:Schwarz_asympt}) to the difference $\bar{R}_{n_1,n_2,m}(z)-\hat{Q}(z)$. The
coefficients of $\hat{Q}$ and the numbers $A_1,\ldots,A_d$ are real, so the boundary values of
the imaginary parts of $\bar{R}_{n_1,n_2,m}(z)$ and $R_{n_1,n_2,m}(z)$ are related by
$$
\Im\bar{R}_{n_1,n_2,m}(x\pm i0)=\Re\big(T(x)/x^d\big)\cdot\Im{R}_{n_1,n_2,m}(x\pm i0),
$$
where for $x>1$
$$
\Re(T(x))=T(x)=|x-\alpha_1|^2\cdots|x-\alpha_k|^2(x-\beta_1)\cdots(x-\beta_l).
$$
Then the Schwarz formula and Theorem~\ref{th:2F1ratioboundary} applied to~$\bar{R}_{n_1,n_2,m}(z)$ lead to the
following representation
\begin{multline*}
R_{n_1,n_2,m}(z)=\frac{z^d}{T(z)}(\bar{R}_{n_1,n_2,m}(z)+L(z))
\\
=\frac{z^d\hat{Q}(z)+z^dL(z)}{T(z)}+\frac{B_{n_1,n_2,m}(a,b,c)z^d}{T(z)}
\int_1^{\infty}\frac{T(x)x^{l-\underline n-c-d}(x-1)^{c-a-b-l}P_{r}(1/x)}{|{}_{2}F_{1}(a,b;c;x)|^{2}(x-z)}dx,
\end{multline*}
where $l$, $\underline n$, $r$, $B_{n_1,n_2,m}(a,b,c)$ and $P_r$ retain their meanings from
Theorem~\ref{th:2F1ratioboundary}.

\section{Pole-free case}

Given~$\xi\in\R$, let~$\lfloor\xi\rfloor$ be the maximal integer number~$\le\xi$. Note that
if~$\xi$ is non-integer, then $\lfloor -\xi \rfloor=-\lfloor\xi\rfloor-1$. The number of zeros
of the Gauss hypergeometric function may be calculated according to Runckel's
theorem~\cite[Theorem]{Runckel}, which we present here in an extended form:
\begin{theorem}[Runckel]\label{th:2F1zeros}
    Given~$a,b,c\in\R$, where~$-c\notin\N_0$, let~$\xi_1,\dots,\xi_4$ be the numbers~$a,b,c-a,c-b$
    taken in non-decreasing order:
    \[
        \min(a,b,c-a,c-b) = \xi_1\le\xi_2\le\xi_3\le\xi_4 = \max(a,b,c-a,c-b).
    \]
    Denote by~$\nu(a,b,c)$ the number of zeros of~${ }_2F_1(a,b;c;z)$
    in~$\C\setminus[1,+\infty)$, as well as on the upper bank of the branch
    cut~$(1,+\infty)$.

    If~$\{-a,-b,a-c,b-c\}\cap\N_0\ne\varnothing$, then
    \[\textstyle
        \nu(a,b,c)=\xi
        \quad\text{with}\quad
        \xi
        =\min\big(\{-a,-b,a-c,b-c\}\cap\N_0\big)
        =\min\big(\bigcup_{j=1}^4(-\xi_j)\cap\N_0\big);
    \]
    otherwise
    \[
        \nu(a,b,c)=
        \begin{cases}
            0,& \text{if~~} \xi_1>0;\\
            \lfloor -\xi_1\rfloor +\frac{1+S}{2},& \text{if~~} \xi_1<0 \text{ ~and~ }\xi_4>0;\\
            \lfloor -\xi_1\rfloor +\frac{1+S}{2}+S\cdot\lfloor 1-\xi_4\rfloor,&
            \text{if~~} \xi_1<0 \text{ ~and~ }\xi_4<0,
        \end{cases}
    \]
    where~$S=\sign\big(\Gamma(a)\Gamma(b)\Gamma(c-a)\Gamma(c-b)\big)
    =\sign\prod_{j=1}^4\Gamma(\xi_j)$.
\end{theorem}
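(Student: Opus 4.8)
The plan is to split the argument according to whether one of $-a,-b,a-c,b-c$ lies in $\N_0$ (the \emph{degenerate} case) or not (the \emph{non-degenerate} case), reducing each to a tractable normal form by means of the symmetries of the problem. Both the number $\xi$ and the function $\nu(a,b,c)$ on the right-hand sides, as well as the sign $S$, are symmetric functions of the unordered quadruple $\{a,b,c-a,c-b\}$, hence invariant under the swap $a\leftrightarrow b$ and under Euler's transformation $(a,b,c)\mapsto(c-a,c-b,c)$. The swap leaves ${}_2F_1(a,b;c;z)$ unchanged, while by~\eqref{eq:Euler_ID} Euler's transformation multiplies it by $(1-z)^{c-a-b}$, a factor that is finite and nonzero throughout $\C\setminus[1,+\infty)$ and assumes a single-valued nonzero value on each bank of the cut $(1,+\infty)$; hence it alters neither the location nor the multiplicity of any zero counted by $\nu(a,b,c)$. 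Since the group generated by these two involutions permutes the entries of the quadruple transitively, I may place any prescribed one of $a,b,c-a,c-b$ into the ``$a$-slot''.

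In the degenerate case I would move into the $a$-slot an entry $\xi_j$ with $-\xi_j=\xi$; by the definition of $\xi$ this makes $a=-\xi\in-\N_0$ while keeping $-b\ge\xi$ whenever $-b\in\N_0$ and $b-c\ge\xi$ whenever $b-c\in\N_0$. Then the series~\eqref{eq:2F1_def} terminates exactly at $n=\xi$, so that $p(z)\coloneqq{}_2F_1(a,b;c;z)$ is a polynomial with real coefficients; its leading coefficient $(a)_\xi(b)_\xi/\bigl((c)_\xi\,\xi!\bigr)$ is nonzero, because $(a)_\xi=(-\xi)_\xi=(-1)^\xi\xi!$, because $(b)_\xi$ could vanish only if $-b\in\{0,\dots,\xi-1\}$, which is ruled out above, and because $-c\notin\N_0$; thus $\deg p=\xi$. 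Moreover $p(1)={}_2F_1(-\xi,b;c;1)=(c-b)_\xi/(c)_\xi$ by the Chu--Vandermonde summation, and this is nonzero for the same reason that $b-c\in\{0,\dots,\xi-1\}$ is ruled out. Consequently $p$ has exactly $\xi$ zeros in $\C$ and none at $z=1$; its non-real zeros lie in $\C\setminus[1,+\infty)$ in conjugate pairs, its real zeros in $(-\infty,1)$ lie there too, and its real zeros in $(1,+\infty)$ lie on the upper bank of the cut. Counting them yields $\nu(a,b,c)=\deg p=\xi$, which by the symmetry invariance is the value for the original parameters.

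In the non-degenerate case $a,b,c-a,c-b\notin-\N_0$, so by Lemma~\ref{lemma:no-zeros-branch-cut} there are no zeros on either bank, and $\nu(a,b,c)$ counts only the zeros of $f(z)\coloneqq{}_2F_1(a,b;c;z)$ inside $\C\setminus[1,+\infty)$. I would compute this by the argument principle for $f$ on the boundary of $\{|z|<R\}\setminus\bigl([1,R]\cup\{|z-1|\le\epsilon\}\bigr)$, letting $\epsilon\to0$ and $R\to\infty$, after normalizing $a=\xi_1=\min(a,b,c-a,c-b)$, so that $a\le b$ and $c-a-b\ge0$. The large circle contributes $-\xi_1$: by the connection formula at $z=\infty$ (cf.~\cite[Ch.~15]{DLMF}) one has $f(z)\sim\tfrac{\Gamma(c)\Gamma(b-a)}{\Gamma(b)\Gamma(c-a)}(-z)^{-a}$ up to a slowly varying (logarithmic when $a=b$) factor with nonzero leading constant, so that $\arg f$ changes by $-2\pi a=-2\pi\xi_1$ along one positive loop. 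The small circle contributes $0$: since $c-a-b\ge0$, Gauss's summation makes $f(1)$ finite and nonzero when $c>a+b$ and $f$ blows up only logarithmically when $c=a+b$, so $\arg f$ has bounded variation near $z=1$ in either case. Finally, since $f$ is real, the two banks contribute equally, namely $\tfrac1\pi$ times the total variation
\[
\Delta\coloneqq\lim_{x\to+\infty}\arg f(x+i0)-\lim_{x\to1^+}\arg f(x+i0)
\]
of the argument of the boundary value, which is well defined because $f(x+i0)\ne0$ for $x>1$. Altogether $\nu(a,b,c)=\Delta/\pi-\xi_1$, so everything reduces to evaluating $\Delta$.

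To find $\Delta$ I would substitute $z=x+i0$, $x>1$, into the connection formula at $z=1$: for $c-a-b\notin\N_0$ this gives $f(x+i0)=C\,\mathcal F_1(x)+D\,e^{-i\pi(c-a-b)}(x-1)^{c-a-b}\mathcal F_2(x)$, where $C=\tfrac{\Gamma(c)\Gamma(c-a-b)}{\Gamma(c-a)\Gamma(c-b)}$, $D=\tfrac{\Gamma(c)\Gamma(a+b-c)}{\Gamma(a)\Gamma(b)}$ and $\mathcal F_1(x)={}_2F_1(a,b;a+b-c+1;1-x)$, $\mathcal F_2(x)={}_2F_1(c-a,c-b;c-a-b+1;1-x)$ are real-valued, being evaluated at the point $1-x<0$ of analyticity; for $c-a-b\in\N_0$ one uses instead the corresponding logarithmic connection formula. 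A sign analysis of the auxiliary real functions $\mathcal F_1,\mathcal F_2$ on $(1,+\infty)$ — carried out via Euler's integral representation, whose hypotheses are supplied by the normalization (after, if needed, the harmless interchange $c-a\leftrightarrow c-b$), and resting ultimately on the same facts that underlie Lemma~\ref{lemma:no-zeros-branch-cut} — together with the explicit signs of $C$, $D$ and of the leading constant at $z=\infty$, lets one trace the curve $x\mapsto f(x+i0)$ and conclude that $\Delta/\pi=\xi_1+\nu(a,b,c)$ with $\nu(a,b,c)$ given by the stated three-case formula; the value of $\xi_4$ and the sign $S$ enter precisely through the limiting arguments at the two ends of $(1,+\infty)$, with an additional half-loop in the regime $\xi_1<0$, $\xi_4<0$. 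The main obstacle is exactly this last step: the asymptotics at $z=\infty$ and $z=1$ determine $\Delta$ only modulo $2\pi$, and excluding spurious windings of $x\mapsto f(x+i0)$ around the origin — which is what forces the appearance of $S$ and of the sign of $\xi_4$ — is the genuine difficulty; for this portion one may alternatively invoke~\cite[Theorem]{Runckel} directly, the new content here being the degenerate case treated above and the observation that the count naturally absorbs the zeros lying on the upper bank.
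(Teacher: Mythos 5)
Your proposal is correct and follows essentially the same route as the paper: the degenerate case is settled by normalizing via the $a\leftrightarrow b$ swap and Euler's identity~\eqref{eq:Euler_ID} so that the series terminates, and then counting the $\xi$ zeros of the resulting degree-$\xi$ polynomial (none of which lies at $z=1$), while the non-degenerate case is reduced by the same symmetries to the normal form $c-a\ge b\ge a$ and handed off to~\cite[Theorem]{Runckel}. Your additional argument-principle sketch is left incomplete at the decisive step (pinning down $\Delta$ and excluding spurious windings), but since you explicitly fall back on citing Runckel for that portion --- exactly as the paper does --- this does not affect the correctness of the proof.
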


\begin{proof}
    Consider the degenerate case when~$\bigcup_{j=1}^4(-\xi_j)\cap\N_0\ne\varnothing$.
    If~$\xi=-a$ or~$\xi=-b$, then the function~${ }_2F_1(a,b,c,z)$ reduces to a polynomial of
    degree~$\xi$ that has precisely~$\xi$ zeros in~$\C\setminus\{1\}$: Lemma~\ref{lm:asymptotic}
    shows that~${ }_2F_1(a,b,c,1)\ne 0$. If~$\xi=a-c$ or~$\xi=b-c$, the
    function~${ }_2F_1(a,b,c,z)$ is a (fractional or integer) power of~$(1-z)$ times a
    polynomial of degree~$\xi$ that similarly has precisely~$\xi$ zeros in~$\C\setminus\{1\}$.

    Now, consider the non-degenerate case~$\bigcup_{j=1}^4(-\xi_j)\cap\N_0=\varnothing$. Observe
    that~$\xi_1+\xi_4=c=\xi_2+\xi_3$ in view of~$a+(c-a)=c=b+(c-b)$. When~$\xi_1=a$, we
    automatically obtain~$\xi_4=c-a$, and hence~$c-a\ge b\ge a$. The last condition allows us to
    use~\cite[Theorem]{Runckel}.

    If~$\xi_1=b$, then~$\xi_4=c-b$, as well as~$\nu(a,b,c)=\nu(b,a,c)$. Therefore, application
    of~\cite[Theorem]{Runckel} to ${ }_2F_1(b,a;c;z)$ furnishes the proof. Analogously,
    if~$\xi_1=c-a$ or~$\xi_1=c-b$, then we employ Euler's identity~\eqref{eq:Euler_ID} to see
    that
    \[
        \nu(a,b,c)=\nu(c-a,c-b,c)=\nu(c-b,c-a,c).
    \]
    So, it is enough to apply~\cite[Theorem]{Runckel} to, respectively, ${ }_2F_1(c-a,c-b;c;z)$
    or~${ }_2F_1(c-b,c-a;c;z)$.
\end{proof}

Recall that the banks of the branch cut may only contain zeros in the degenerate
case $\{-a,-b,a-c,b-c\}\cap\N_0\ne\varnothing$, see Lemma~\ref{lemma:no-zeros-branch-cut}. The
function~${ }_2F_1(a,b,c,z)$ is then a polynomial, possibly multiplied by a (fractional or
integer) power of~$(1-z)$. So, Theorem~\ref{th:2F1zeros} mentions ``the upper bank of the branch
cut'' in order to count each zero of that polynomial in~$(1,+\infty)$ exactly one time (the
situation on both banks of the branch cut is the same, as~${ }_2F_1(a,b,c,z)$ is a real
function). This is different to the statement of~\cite[Theorem]{Runckel} that speaks about both
banks of the branch cut together with the branch point~$z=1$. The reason is that~\cite{Runckel}
only touches upon the special non-degenerate case, where this distinction disappears.

One can also count the number of \emph{real} zeros of~${ }_2F_1(a,b;c;z)$ for~$z\in(0,1)$ and
(via Pfaff's transformation~\cite[p.~64, eq.~(22)]{HTF1}) for~$z<0$ by applying the
results of~\cite{Klein,Hurwitz}, see also~\cite{DJ} for the polynomial case.

The following corollary is an extended and improved version of \cite[Theorem~2.1]{DK}.

\begin{corollary}\label{cr:2F1zeros}
    Suppose $c\ne0$. Then ${ }_2F_1(a,b;c;z)$ does not vanish for $z\in\C\setminus[1,+\infty)$
    as well as on the banks of the branch cut if and only if any of the following conditions is
    true:

    \begin{enumerate}[\upshape (I)]
    \item\label{item:Run1} $-1<\min(a,b)\le{c}\le\max(a,b)\le0$;
    \item\label{item:Run2} $-1<\min(a,b)\le0\le\max(a,b)\le{c}$;
    \item\label{item:Run3} $-1<c\le\min(a,b)\le0\le\max(a,b)<c+1$;
    \item\label{item:Run4} $0\le\min(a,b)\le{c}$ ~and~ $\max(a,b)<c+1$;
    \item\label{item:Run5} $a,b,c,c-a,c-b$ are non-integer negative numbers, such
        that~$\lfloor\xi_1\rfloor+1=\lfloor\xi_4\rfloor$
        and~$\lfloor\xi_2\rfloor=\lfloor\xi_3\rfloor$, where~$\xi_1,\dots,\xi_4$ are the
        numbers~$a,b,c-a,c-b$ taken in  non-decreasing order:
        \[
            \min(a,b,c-a,c-b) = \xi_1\le\xi_2\le\xi_3\le\xi_4 = \max(a,b,c-a,c-b);
        \]
    \item\label{item:Run6} $0\in\{a,b,c-a,c-b\}$.
    \end{enumerate}
\end{corollary}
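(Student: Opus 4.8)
My plan is to reduce the claim to the single identity $\nu(a,b,c)=0$ and then read the six cases off Runckel's formula in Theorem~\ref{th:2F1zeros}. By Lemma~\ref{lemma:no-zeros-branch-cut} the function ${}_2F_1(a,b;c;z)$ can vanish on the banks of the cut only in the degenerate case, the two banks carry the same zeros (as noted after Theorem~\ref{th:2F1zeros}), and ${}_2F_1(a,b;c;1)\ne0$ in every relevant situation (see the proof of that theorem); hence ``no zeros in $\C\setminus[1,+\infty)$ and on the banks of the branch cut'' is literally $\nu(a,b,c)=0$. If $\{-a,-b,a-c,b-c\}\cap\N_0\ne\varnothing$, then $\nu(a,b,c)=\min\big(\{-a,-b,a-c,b-c\}\cap\N_0\big)$, which is $0$ exactly when $0$ lies in this set, i.e.\ when $0\in\{a,b,c-a,c-b\}$; this is condition~(VI). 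So it remains to characterise $\nu(a,b,c)=0$ in the non-degenerate case.

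In the non-degenerate case no $\xi_j$ is a non-positive integer, so each $\xi_j$ is either positive or a negative non-integer; I would split on the signs of $\xi_1$ and $\xi_4$, using $\xi_1+\xi_4=\xi_2+\xi_3=c$ throughout. When $\xi_1>0$ all four numbers are positive and $\nu=0$ automatically; unwinding $\xi_1>0$ gives $0<\min(a,b)\le\max(a,b)<c$, which sits inside condition~(IV).

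The core of the argument is the case $\xi_1<0<\xi_4$, where the formula reads $\nu=\lfloor-\xi_1\rfloor+\tfrac{1+S}{2}$ with $S=\sign\prod_{j=1}^{4}\Gamma(\xi_j)$. Since $\xi_1$ is a negative non-integer, this vanishes if and only if $-1<\xi_1<0$ and $S=-1$. Because $\Gamma(\xi_4)>0$ and $\Gamma(\xi_1)<0$ (as $\xi_1\in(-1,0)$), the condition $S=-1$ is equivalent to $\Gamma(\xi_2)\Gamma(\xi_3)>0$; and since $\xi_1\le\xi_2\le\xi_3$ forces any negative $\xi_2$ or $\xi_3$ into $(-1,0)$, where $\Gamma<0$, this holds exactly when $\xi_2$ and $\xi_3$ are both positive (so $\xi_1$ is the only negative one of the four) or both negative (so $\xi_1,\xi_2,\xi_3\in(-1,0)$ and $\xi_4$ is the only positive one). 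In each of these two scenarios I would decide which of $a,b,c-a,c-b$ is the unique negative (resp.\ unique positive) member and rewrite the chain of inequalities it entails in terms of $\min(a,b)$, $\max(a,b)$, $c$; this should reproduce exactly the interiors of conditions~(I)--(IV), the boundary equalities being absorbed into~(VI). This passage from the ordered quadruple $\xi_1,\dots,\xi_4$ back to $(a,b,c)$ --- deciding which parameter is smallest and which largest in each subcase --- is elementary but fiddly, and I expect it to be the main obstacle.

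Finally, if $\xi_4<0$ then $a,b,c-a,c-b$ are all negative non-integers and $c=\xi_1+\xi_4<0$ (and $c$ must be non-integer, as otherwise $-c\in\N_0$ would leave the non-terminating series undefined). Writing $\lfloor-\xi\rfloor=-\lfloor\xi\rfloor-1$ for non-integer $\xi$ turns the formula into $\nu=-1-\lfloor\xi_1\rfloor+\tfrac{1+S}{2}-S\lfloor\xi_4\rfloor$. For $S=1$ this is $-\lfloor\xi_1\rfloor-\lfloor\xi_4\rfloor\ge2$, so $S=-1$ is forced; then $\nu=\lfloor\xi_4\rfloor-\lfloor\xi_1\rfloor-1$, which vanishes iff $\lfloor\xi_4\rfloor=\lfloor\xi_1\rfloor+1$. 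Since here $S=(-1)^{\lfloor\xi_1\rfloor+\lfloor\xi_2\rfloor+\lfloor\xi_3\rfloor+\lfloor\xi_4\rfloor}$, the requirement $S=-1$ combined with $\lfloor\xi_4\rfloor=\lfloor\xi_1\rfloor+1$ makes $\lfloor\xi_2\rfloor+\lfloor\xi_3\rfloor$ even, and together with $\lfloor\xi_1\rfloor\le\lfloor\xi_2\rfloor\le\lfloor\xi_3\rfloor\le\lfloor\xi_1\rfloor+1$ this is possible only if $\lfloor\xi_2\rfloor=\lfloor\xi_3\rfloor$ --- exactly condition~(V). Every implication used above is an equivalence once one accounts for the many-to-one $\xi\leftrightarrow(a,b,c)$ dictionary (whose boundary pieces reappear in~(VI)), so the converse holds too; I would finish with a short direct check that each of (I)--(VI) indeed places $(a,b,c)$ into one of the enumerated cases with $\nu=0$.
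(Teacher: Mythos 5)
Your proposal is correct and follows essentially the same route as the paper: reduce the claim to $\nu(a,b,c)=0$ via the extended Runckel theorem (Theorem~\ref{th:2F1zeros}), dispose of the degenerate case through condition~(VI), and case-split on the signs of $\xi_1$ and $\xi_4$ (with the sign of $\Gamma(\xi_2)\Gamma(\xi_3)$ and the floor identities handled exactly as in the paper's necessity argument). The only departure is that you propose to establish sufficiency by reversing the case analysis plus a short direct check, whereas the paper delegates that half to \cite[Theorem~2.1]{DK}; your sketch of that check (the boundary equalities of (I)--(IV) falling into (VI), the strict-inequality interiors being automatically non-degenerate) is sound.
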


In the notation of condition~\eqref{item:Run5}, one necessarily has~$c-\xi_4=\xi_1$
and~$c-\xi_2=\xi_3$. Indeed: $\xi_1+\xi_4=c=\xi_2+\xi_3$ in view of~$a+(c-a)=c=b+(c-b)$.
Moreover, condition~\eqref{item:Run5} implies~$\xi_1<\xi_2$ and~$\xi_3<\xi_4$: if we had one of
the equalities~$\xi_1=\xi_2$ and~$\xi_3=\xi_4$, we automatically had the other, but the last two
equalities cannot hold simultaneously due to~$\lfloor\xi_1\rfloor+1=\lfloor\xi_4\rfloor$
and~$\lfloor \xi_2\rfloor=\lfloor \xi_3\rfloor$.

\begin{proof}
    On assuming that~$\{-a,-b,a-c,b-c\}\cap\N_0\ne\varnothing$ holds, the condition
    $0\in\{a,b,c-a,c-b\}$ becomes equivalent to that~${ }_2F_1(a,b,c,z)\ne 0$
    for~$z\in\C\setminus[1,+\infty)$ as well as on the banks of the branch cut~$(1,+\infty)$,
    see Theorem~\ref{th:2F1zeros}.

    Let us show now that for~$\{-a,-b,a-c,b-c\}\cap\N_0=\varnothing$ the necessity part holds,
    namely: if ${ }_2F_1(a,b;c;z)\ne 0$ for $z\in\C\setminus[1,+\infty)$ as well as on the banks
    of the branch cut~$(0,1)$, then at least one of the
    conditions~\eqref{item:Run1}--\eqref{item:Run5} is satisfied. By Theorem~\ref{th:2F1zeros},
    either~$\xi_1>0$, or simultaneously~$\xi_1<0$, ~$\prod_{j=1}^4\Gamma(\xi_j)<0$
    and~$(\lfloor \xi_4\rfloor)_--\lfloor \xi_1\rfloor=1$. If~$\xi_1>0$, then~$0<\min(a,b)<c$
    and~$\max(a,b)<c$, so we obtain the condition~\eqref{item:Run4}.

    Let~$-1<\xi_1<0$, then~$\xi_4>0$ and, due to~$\prod_{j=1}^4\Gamma(\xi_j)<0$,
    additionally~$\xi_2\cdot\xi_3>0$. If~$\xi_1=\min(a,b)$, then~$\xi_4=c-\xi_1=c-\min(a,b)$ as
    remarked above, and hence~$(c-\max(a,b))\cdot\max(a,b)>0$. The case~$-1<\xi_1=\min(a,b)<0$
    and~$c-\max(a,b)>0$ yields~$\max(a,b)>0$, and hence that condition~\eqref{item:Run2} holds.
    The case~$-1<\xi_1=\min(a,b)<0$ and~$c-\max(a,b)<0$ yields~$\max(a,b)<0$, and hence that the
    condition~\eqref{item:Run1} holds.

    If~$\xi_1=\min(c-a,c-b)=c-\max(a,b)$, then automatically~$\xi_4=c-\xi_1=\max(a,b)$, and
    hence~$(c-\min(a,b))\cdot\min(a,b)>0$. So, the assumptions~$-1<c-\max(a,b)<0$
    and~$\min(a,b)>0$ yield~$c-\min(a,b)>0$, which leads to condition~\eqref{item:Run4}. In
    turn, the assumptions~$-1<c-\max(a,b)<0$ and~$\min(a,b)<0$ yield~$c-\min(a,b)<0$, and hence
    condition~\eqref{item:Run3} is satisfied.

    Now, let~$\xi_1<-1$. The equality~$(\lfloor \xi_4\rfloor)_--\lfloor \xi_1\rfloor=1$ given by
    Theorem~\ref{th:2F1zeros} then implies
    \[
        \lfloor \xi_4\rfloor=1+\lfloor \xi_1\rfloor<0.
    \]
    In particular,~$\xi_1,\xi_2,\xi_3,\xi_4$ are non-integer negative numbers. Now, the
    additional inequality~$\lfloor \xi_2 \rfloor < \lfloor\xi_3\rfloor$ is impossible, since it
    leads to the contradiction~$\prod_{j=1}^4\Gamma(\xi_j)>0$. Therefore, for~$\xi_1<-1$
    condition~\eqref{item:Run5} is satisfied.

    The sufficiency part for~$\{-a,-b,a-c,b-c\}\cap\N_0=\varnothing$ is
    precisely~\cite[Theorem~2.1]{DK}.
\end{proof}

The next Corollary aims at lifting the integrability condition~$\nu>-1$
of~\cite[Theorem~2.12]{DK}. This corollary turns into a complete generalization
of~\cite[Theorem~2.12]{DK} if the rational function~$Q(z)$ from
Theorem~\ref{th:2F1ratio-gen-repr} is retained (instead of letting~$Q(z)\equiv 0$) and if~$Q(z)$ is
analytic in~$\C\setminus\{1\}$: this rational function then generalizes the
polynomial~$Q_{a,b,c}(z)$ from~\cite[Theorem~2.12]{DK}.

\begin{corollary}\label{cr:2F1ratio-repr}
    Suppose that~$a,b,c\in\R$ and~$n_1,n_2,m\in\Z$, where~$-c,-c-m\notin\N_0$. Let any of the
    conditions~\eqref{item:Run1}--\eqref{item:Run6} in Corollary~\ref{cr:2F1zeros} be satisfied,
    so that ${ }_2F_{1}(a,b;c;z)\ne0$ for all $z$ in~$\C\setminus[1,+\infty)$ and on the banks
    of the branch cut~$(1,+\infty)$. Take numbers~$M,N\in\N_0$ such that
    \begin{equation}\label{eq:asympinf1C}
        R_{n_1,n_2,m}(z)=o\left((1-z)^{-M-1}\right)~\text{as}~z\to 1
        \text{~~and~~}
        R_{n_1,n_2,m}(-z)=o(z^{N-M})~\text{as}~z\to\infty,
    \end{equation}
    or, equivalently, in terms of~$\eta(a,b,c)$ and~$\zeta(a,b,c)$ defined
    in~\eqref{eq:asymp_1_eta}--\eqref{eq:asymp_inf_eta},
    \[
        M>\eta(a,b,c)-\eta(a+n_1,b+n_2,c+m)-1 \text{~~and~~}
        N>M+\zeta(a+n_1,b+n_2,c+m)-\zeta(a,b,c).
    \]
    Then
    \begin{multline}\label{eq:mainrepresentionN}
        R_{n_1,n_2,m}(z)=\sum_{k=0}^{N-1} \frac{z^k}{(1-z)^M} \sum_{j=0}^{\min\{M,k\}}
        (-1)^j\binom{M}{j}\frac{R_{n_1,n_2,m}^{(k-j)}(0)}{(k-j)!}
        \\
        + \frac{z^N}{(z-1)^M} B_{n_1,n_2,m}(a,b,c) \int_1^{\infty}
        \frac{x^{l-\underline{n}-c-N}(x-1)^{M+c-a-b-l}P_{r}(1/x)}{|{}_{2}F_{1}(a,b;c;x)|^{2}(x-z)}dx,
    \end{multline}
    where $r,l,\underline{n}$ and $B_{n_1,n_2,m}(a,b,c)$ are the same as in
    Theorem~\ref{th:2F1ratioboundary} and $P_r$ is defined in \eqref{eq:Prexplicit}.

    In particular, if \eqref{eq:asympinf1C} holds with $N=M=0$ we obtain
    \begin{equation}\label{eq:mainrepresention}
        R_{n_1,n_2,m}(z)=B_{n_1,n_2,m}(a,b,c)\int_1^{\infty}
        \frac{x^{l-\underline{n}-c}(x-1)^{c-a-b-l}P_{r}(1/x)}{|{}_{2}F_{1}(a,b;c;x)|^{2}(x-z)}dx.
    \end{equation}
\end{corollary}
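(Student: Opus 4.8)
The plan is to deduce this corollary directly from Theorem~\ref{th:2F1ratio-gen-repr} by choosing the rational function $Q$ and the polynomial $T$ as trivially as possible. Since the hypotheses place us under one of the conditions \eqref{item:Run1}--\eqref{item:Run6} of Corollary~\ref{cr:2F1zeros}, the function ${}_2F_1(a,b;c;z)$ has no zeros in $\C\setminus[1,+\infty)$ nor on the banks of the branch cut; hence $R_{n_1,n_2,m}(z)$ is free of poles there, and we may simply put $Q(z)\equiv 0$ and $T(z)\equiv 1$, so that $d=\deg T=0$. The analyticity of $R_{n_1,n_2,m}$ near the origin is automatic (its denominator does not vanish at $z=0$ because $-c\notin\N_0$), and \eqref{eq:asymp_b} holds trivially with $Q\equiv 0$, $T\equiv 1$ once we know the growth of $R_{n_1,n_2,m}$ near $z=1$ is at most polynomial, which is provided by Lemma~\ref{lm:asymptotic}.

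The next step is to verify that the conditions \eqref{eq:asympinf1C} imposed on $M,N$ coincide with the conditions \eqref{eq:asympinf1} of Theorem~\ref{th:2F1ratio-gen-repr} when $Q\equiv 0$, and to translate them into the stated inequalities for $M$ and $N$ in terms of $\eta$ and $\zeta$. For the behaviour near $z=1$, formula~\eqref{eq:asymp1} of Lemma~\ref{lm:asymptotic} gives $R_{n_1,n_2,m}(z)\asymp (1-z)^{\eta(a+n_1,b+n_2,c+m)-\eta(a,b,c)}[\log(1-z)]^{\varepsilon_1}$, so $R_{n_1,n_2,m}(z)=o\bigl((1-z)^{-M-1}\bigr)$ holds precisely when $\eta(a+n_1,b+n_2,c+m)-\eta(a,b,c)>-M-1$, i.e. $M>\eta(a,b,c)-\eta(a+n_1,b+n_2,c+m)-1$ (the logarithm being harmless since the exponent is an integer, so strict inequality of exponents suffices). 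Likewise, \eqref{eq:asymp-inf} gives $R_{n_1,n_2,m}(-z)\asymp z^{\zeta(a+n_1,b+n_2,c+m)-\zeta(a,b,c)}[\log z]^{\varepsilon_\infty}$, and $R_{n_1,n_2,m}(-z)=o(z^{N-M})$ is equivalent to $N-M>\zeta(a+n_1,b+n_2,c+m)-\zeta(a,b,c)$, i.e. $N>M+\zeta(a+n_1,b+n_2,c+m)-\zeta(a,b,c)$. This establishes the claimed equivalence.

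Having checked the hypotheses, I would simply specialise formula~\eqref{eq:mainrepresentionQN} of Theorem~\ref{th:2F1ratio-gen-repr} to $Q\equiv 0$, $T\equiv 1$, $d=0$. The term $Q(z)$ drops out. In the double sum, $T^{(k-j-h)}(0)=0$ unless $k-j-h=0$, so only $h=k-j$ survives, forcing $j\ge k-M$ (since $h\le M$) and $T^{(0)}(0)=1$; relabelling $j\mapsto k-j$ turns the inner sum into $\sum_{j=0}^{\min\{M,k\}}(-1)^j\binom{M}{j}R_{n_1,n_2,m}^{(k-j)}(0)/(k-j)!$, which is exactly the finite sum in~\eqref{eq:mainrepresentionN}. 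In the integral, $T(x)\equiv 1$ and the exponents specialise to $x^{l-\underline{n}-c-N}(x-1)^{M+c-a-b-l}$, while the prefactor becomes $B_{n_1,n_2,m}(a,b,c)z^N/(z-1)^M$; this is precisely the integral term of~\eqref{eq:mainrepresentionN}. Finally, setting $N=M=0$ empties the sum in $k$ and reduces the prefactor to $B_{n_1,n_2,m}(a,b,c)$, yielding~\eqref{eq:mainrepresention}. The only point that requires any care is the reindexing in the finite sum and confirming that no boundary terms are lost in the degenerate $N=M=0$ case; the analytic substance is entirely carried by Theorem~\ref{th:2F1ratio-gen-repr}, so there is no real obstacle beyond this bookkeeping.
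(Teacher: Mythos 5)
Your proposal is correct and follows exactly the paper's own route: the authors likewise invoke Corollary~\ref{cr:2F1zeros} to conclude that $R_{n_1,n_2,m}$ is pole-free and then apply Theorem~\ref{th:2F1ratio-gen-repr} with $Q(z)\equiv 0$ and $T(z)\equiv 1$. The extra detail you supply (the translation of~\eqref{eq:asympinf1C} into the $\eta,\zeta$ inequalities via Lemma~\ref{lm:asymptotic} and the reindexing that collapses the triple sum of~\eqref{eq:mainrepresentionQN} into the double sum of~\eqref{eq:mainrepresentionN}) is accurate bookkeeping that the paper leaves implicit.
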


\begin{proof}
    According to~\ref{cr:2F1zeros}, the function~$R_{n_1,n_2,m}(z)$ is analytic
    in~$\C\setminus[1,+\infty)$ and on the banks of the branch cut~$(1,+\infty)$. Therefore, one
    can apply Theorem~\ref{th:2F1ratio-gen-repr} assuming~$Q(z)\equiv 0$ and~$T(z)\equiv 1$.
\end{proof}

\section{Examples and Application}

An interesting application is to plug the integral representations provided by
Theorem~\ref{th:2F1ratio-gen-repr} and Corollary~\ref{cr:2F1ratio-repr} into various
hypergeometric expressions: for instance, into those given in~\cite{Conway}.

By taking derivatives on both sides of formula \cite[eq.~(2.9)]{Conway} and changing $c\to c+1$ we get:
$$
z^c\,\frac{{}_2F_1(a+1,b+1;c+1;z){}_2F_1(a,b;c+1;z)}{\left[{}_2F_1(a,b;c;z)\right]^2}
=\frac{c^2z^{c-1}}{ab}\left(1-R_{0,0,1}(z)\right)-\frac{cz^{c}}{ab}\frac{d}{dz}R_{0,0,1}(z)
$$
On the other hand, according to \cite[Example~8]{DK} we have
\begin{equation}\label{eq:Example8}
R_{0,0,1}(z)=Q_{a,b,c}-\frac{\Gamma(c)\Gamma(c+1)}{\Gamma(a)\Gamma(b)\Gamma(c-a+1)\Gamma(c-b+1)}
\int_0^1\frac{t^{a+b-1}(1-t)^{c-a-b}dt}{(1-zt)|{}_2F_1(a,b;c;1/t)|^2}.
\end{equation}
where
$$
Q_{a,b,c}=\frac{c}{c-\min(a,b)}.
$$
Substituting and simplifying we arrive at the representation
$$
\begin{aligned}
z R_{1,1,1}(z)R_{0,0,1}(z)&=
z\frac{{}_2F_1(a+1,b+1;c+1;z){}_2F_1(a,b;c+1;z)}{\left[{}_2F_1(a,b;c;z)\right]^2}
\\
&=\frac{c^2}{ab}(1-Q_{a,b,c})+B\!\!\int\limits_0^1
\frac{t^{a+b-1}(1-t)^{c-a-b}(c+zt(1-c))dt}{(1-zt)^2|{}_2F_1(a,b;c;1/t)|^2},
\end{aligned}
$$
where
$$
B=\frac{[\Gamma(c+1)]^2}{\Gamma(a+1)\Gamma(b+1)\Gamma(c-a+1)\Gamma(c-b+1)}.
$$
This representation holds under the assumptions of Corollary~\ref{cr:2F1zeros}.

Even more surprising result follows by application of \cite[eq.~(2.10)]{Conway} which after
differentiation and changing $c\to c+1$ reads:
$$
\frac{{}_2F_1(a,b;c-1;z){}_2F_1(a,b;c+1;z)}{\left[{}_2F_1(a,b;c;z)\right]^2}
=\frac{1}{c-1}\left(c-R_{0,0,1}(z)\right)-\frac{z}{c-1}\frac{d}{dz}R_{0,0,1}(z).
$$
Combining this with \eqref{eq:Example8} we obtain the representation containing the so-called
generalized Stieltjes transform of order~$2$ of a positive measure:
\begin{multline*}
    R_{0,0,-1}(z)R_{0,0,1}(z)
    =
    \frac{{}_2F_1(a,b;c-1;z){}_2F_1(a,b;c+1;z)}{\left[{}_2F_1(a,b;c;z)\right]^2}
    \\
    =\frac{c(c-\min(a,b)-1)}{(c-1)(c-\min(a,b))}+\frac{\Gamma(c-1)\Gamma(c+1)}{\Gamma(a)\Gamma(b)\Gamma(c-a+1)\Gamma(c-b+1)}
    \!\!\int\limits_0^1\frac{t^{a+b-1}(1-t)^{c-a-b}dt}{(1-zt)^2|{}_2F_1(a,b;c;1/t)|^2}.
\end{multline*}
This representation also holds under the assumptions of Corollary~\ref{cr:2F1zeros}. In
particular, if parameters are positive and $c>1$, $c-a,c-b>-1$, this representation shows that
the function on the left is monotonically increasing on $(-\infty,1)$. In fact, a
much stronger claim holds: if $c\ge\min(a,b)+1$ and the constant in front of the integral is
positive, then the function $x\to R_{0,0,-1}(-x)R_{0,0,1}(-x)$ is logarithmically completely
monotonic on $[0,\infty)$. This follows from the highly non-trivial inclusion of the Stieltjes
class of order $2$ into the class of logarithmically completely monotonic functions. Details and
history can be found in \cite[Theorem~2.1]{BKP}. Logarithmic complete monotonicity of this
function may be very hard to establish by other means.

It is clear that other formulae from~\cite[Section~2]{Conway} may also be treated in the same way.

\medskip

\noindent\textbf{Example~1.} %
To illustrate Corollary~\ref{cr:2F1ratio-repr}, let us modify the integral expression
for~$R_{1,1,1}(z)$ constructed in~\cite[Example~3]{DK}, so that the result becomes applicable
under milder conditions. Suppose that any of the conditions~\eqref{item:Run1}--\eqref{item:Run5} in
Corollary~\ref{cr:2F1zeros} is satisfied. For simplicity, restrict ourselves to the
non-degenerate case~$a,b,c-a,c-b\notin-\N_0$.

Lemma~\ref{lm:asymptotic} shows that, for~$\tau\coloneqq(c-a-b-1)_--(c-a-b)_-$
and~$\varepsilon_1,\varepsilon_\infty\in\{-1,0,1\}$,
\[
    \begin{aligned}
    R_{1,1,1}(z)
    &=L_1\,(1-z)^{\tau}
    \,[\log(1-z)]^{\varepsilon_1}\big(1+o(1)\big)
    \text{as}~~z\to1
    ,
    \\
    R_{1,1,1}(-z)
    &=L_\infty\,z^{-1}
    \,[\log(z)]^{\varepsilon_\infty}\big(1+o(1)\big)
    \text{as}~~z\to\infty
    .
    \end{aligned}
\]
Observe also that~$\tau\ge (c-a-b)_--1-(c-a-b)_-=-1$, and that~$\tau>-1$ is equivalent
to~$c>a+b$. In~\cite[Example~3]{DK}, we required~$c>a+b$ to make~$R_{1,1,1}(z)$ integrable
near~$z=1$. Now, let us also allow the reverse inequality~$c\le a+b$ that implies~$\tau=-1$.
Corollary~\ref{cr:2F1ratio-repr} with~$N=M=1$ in this case yields
\[
    R_{1,1,1}(z)=\frac{1}{1-z} + \frac{z}{z-1}
    \frac{\Gamma(c)\Gamma(c+1)}{\Gamma(a+1)\Gamma(b+1)\Gamma(c-a)\Gamma(c-b)}
    \int_0^1\frac{t^{a+b}(1-t)^{c-a-b}dt}{(1-zt)|{}_2F_1(a,b;c;1/t)|^2},
\]
which holds true for both~$c>a+b$ and~$c\le a+b$.

\medskip

\noindent\textbf{Example~2.} %
In a similar way, let us modify the integral expression for~$R_{0,0,-1}(z)$ constructed
in~\cite[Example~9]{DK}. Suppose that at least one of the
conditions~\eqref{item:Run1}--\eqref{item:Run5} in Corollary~\ref{cr:2F1zeros} is satisfied, and
that~$a,b,c-a,c-b\notin-\N_0$.
From~\cite[Subsection~2.1]{DK} one can see that
\[
    \begin{aligned}
    R_{0,0,-1}(z)
    &=L_1\,(1-z)^{\tau}
    \,[\log(1-z)]^{\varepsilon_1}\big(1+o(1)\big)
    ~~\text{as}~~z\to1,
    \\
    R_{0,0,-1}(-z)
    &=Q+o(1)~~\text{as}~~z\to\infty
    \end{aligned}
\]
hold for some numbers~$\varepsilon_1\in\{-1,0,1\}$ and~$L_1\ne0$, where
\[
Q=(c-\min(a,b)-1)/(c-1)
\text{~~and~~}
\tau=(c-a-b-1)_--(c-a-b)_-.
\]
Observe that~$\tau\ge -1$, and that the strict inequality~$\tau>-1$ is equivalent to~$c>a+b$.
Now, we remove the assumption~$c>a+b$ of~\cite[Example~9]{DK} introduced there to
make~$R_{0,0,-1}(z)-Q$ integrable near~$z=1$. Theorem~\ref{th:2F1ratio-gen-repr}
with~$N=M=1$ and~$T(z)\equiv 1$ yields
\[
    R_{0,0,-1}(z)=Q+\frac{1-Q}{1-z}
    + \frac{z}{z-1}
    \frac{\Gamma(c)\Gamma(c-1)}{\Gamma(a)\Gamma(b)\Gamma(c-a)\Gamma(c-b)}
    \int_0^1\frac{t^{a+b-1}(1-t)^{c-a-b}dt}{(1-zt)|{}_2F_1(a,b;c;1/t)|^2}
    .
\]
From here, one can also immediately derive an analogous expression for~$R_{0,1,0}(z)$ by
applying the formula~\cite[eq.~12]{Gauss}
\[
R_{0,1,0}(z)=\frac{c-1}{b}R_{0,0,-1}(z)-\frac{c-b-1}{b}.
\]

\medskip

\noindent\textbf{Example~3.} %
For the Gauss ratio $R_{0,1,1}(z)$, Lemma~\ref{lm:2F1identity} and definition \eqref{eq:B-defined}
imply:
$$
B_{0,1,1}P_0(t)\equiv
\frac{\Gamma(c)\Gamma(c+1)}{\Gamma(a)\Gamma(b+1)\Gamma(c-a+1)\Gamma(c-b)}.
$$
So, due to~$\lim\limits_{z\to\infty}R_{0,1,1}(z)=[c(b-a)_+]/[b(c-a)]$
Corollary~\ref{cr:2F1ratio-repr} yields (cf. \cite[Example~1]{DK}):
\begin{equation}\label{eq:R011_integral_repr}
    R_{0,1,1}(z)=\frac{c(b-a)_+}{b(c-a)}
    +\frac{\Gamma(c)\Gamma(c+1)}{\Gamma(a)\Gamma(b+1)\Gamma(c-b)\Gamma(c-a+1)}
    \int\limits_{0}^{1}\frac{t^{a+b-1}(1-t)^{c-a-b}dt}{(1-zt)|{}_2F_1(a,b;c;t^{-1})|^2}.
\end{equation}
In order for this representation to hold we need to assume that any of the conditions
\eqref{item:Run1}-\eqref{item:Run6} of Corollary~\ref{cr:2F1zeros} is satisfied (for
condition~\eqref{item:Run6}, we additionally require~$a\ne c$ to exclude a non-integrable case).
Then verification of the formulae~\eqref{eq:asymp1} and~\eqref{eq:asymp_1_eta} shows
that~$|R_{0,1,1}(z)|$ is integrable near~$z=1$.

Let, for instance,~$0<c<a<c+1$ and $-1<b<0$, so conditions of Corollary~\ref{cr:2F1zeros} are
violated. Theorem~\ref{th:2F1zeros} shows that $R_{0,1,1}(z)$ actually has a unique simple pole
denoted further by~$\beta_1$, ~$\beta_1\ne 1$. Since~$R_{0,1,1}(z)$ is a real function, this
pole is necessarily real. Lemma~\ref{lemma:no-zeros-branch-cut} implies
that~$\beta_1\notin[1,\infty)$. According to Theorem~\ref{th:2F1ratio-gen-repr},
\[
    \begin{aligned}
    R_{0,1,1}(z)
    ={}&\frac{c(b-a)_+}{b(c-a)}+\beta_1\frac{b(c-a)-c(b-a)_+}{b(c-a)(\beta_1-z)}
    \\
    &+\frac{z}{z-\beta_1}
    \frac{\Gamma(c)\Gamma(c+1)}{\Gamma(a)\Gamma(b+1)\Gamma(c-b)\Gamma(c-a+1)}
    \int\limits_{0}^{1}\frac{t^{a+b-1}(1-t\beta_1)(1-t)^{c-a-b}dt}{(1-zt)|{}_2F_1(a,b;c;t^{-1})|^2}
    .
    \end{aligned}
\]
At the origin, the left-hand side in Pfaff's identity~\cite[p.~64, eq.~(22)]{HTF1}
\[
{ }_2F_1(a,c-b;c;z)= (1-z)^{-a}{ }_2F_1\Big(a,b;c;\frac z{z-1}\Big)
\]
has a Taylor expansion with only positive coefficients, so it cannot vanish in~$(0,1)$.
Consequently, the only option is~$\beta_1\in(0,1)$.

Alternatively, the choice~$T(z)\equiv1$ in Theorem~\ref{th:2F1ratio-gen-repr} gives another expression:
\[
    R_{0,1,1}(z)
    =\frac{c(b-a)_+}{b(c-a)} + \frac{A_1}{z-\beta_1}
    +\frac{\Gamma(c)\Gamma(c+1)}{\Gamma(a)\Gamma(b+1)\Gamma(c-b)\Gamma(c-a+1)}
    \int\limits_{0}^{1}\frac{t^{a+b-1}(1-t)^{c-a-b}dt}{(1-zt)|{}_2F_1(a,b;c;t^{-1})|^2}
    ,
\]
where the residue~$A_1$ at~$z=\beta_1$ may be computed through the formula
\[
A_1=\frac {c}{ab}\frac{{ }_2F_1(a,b+1,c+1,\beta_1)}{{ }_2F_1(a+1,b+1,c+1,\beta_1)}.
\]

\end{document}